\documentclass{article}

\usepackage{PRIMEarxiv}

\usepackage[utf8]{inputenc} 
\usepackage[T1]{fontenc}    
\usepackage{hyperref}       
\usepackage{url}            
\usepackage{booktabs}       
\usepackage{amsfonts}       
\usepackage{nicefrac}       
\usepackage{microtype}      
\usepackage{lipsum}
\usepackage{amssymb}
\usepackage{amsmath}
\usepackage{amsthm}
\usepackage{amsfonts}
\usepackage{mathtools}
\usepackage{fancyhdr}       
\usepackage{graphicx}       
\graphicspath{{media/}}     

\title{Exact random centering and hybrid fluctuation limits for interacting reinforced processes under critical Markov switching
}

\author{
  Hugo Cruz-Su\'arez \\
  Facultad de Ciencias F\'isico Matem\'aticas,  \\
  Benem\'erita Universidad Aut\'onoma de Puebla, \\
  Puebla, Pue., MÉXICO\\
  \texttt{hugo.cruz@correo.buap.mx} \\
}

\newcommand{\Pb}{\mathbb{P}}
\newcommand{\Eb}{\mathbb{E}}
\newcommand{\Rb}{\mathbb{R}}
\newcommand{\F}{\mathcal{F}}
\newtheorem{theorem}{Theorem}[section]
\newtheorem{corollary}[theorem]{Corollary}
\newtheorem{lemma}[theorem]{Lemma}
\newtheorem{proposition}[theorem]{Proposition}
\theoremstyle{definition}
\newtheorem{assumption}[theorem]{Assumption}
\theoremstyle{remark}
\newtheorem{remark}[theorem]{Remark}
\begin{document}
\maketitle

\begin{abstract}
We study two interacting reinforced occupation processes driven by a common two-state Markov environment whose switching probabilities decrease at the same rate as the stochastic-approximation gain. At this critical scale, the environmental motion persists in the first-order limit, which is a telegraph-driven piecewise deterministic Markov process. Consequently, centering at a deterministic equilibrium does not separate the order-one environmental response from the intrinsic square-root fluctuations. To overcome this obstruction, we introduce an exact discrete random centering that removes the accumulated environmental forcing before rescaling. An exact decomposition into collective and synchronization coordinates then separates the two contraction mechanisms of the system. Under the contraction condition corresponding to the square-root regime, we prove joint functional convergence of the randomly centered coordinates and the first-order background to a stationary hybrid PDMP--diffusion system. Its stationary entrance law is characterized through stochastic convolutions over the remote logarithmic past. Conditionally on the complete environmental path, the two fluctuation modes are independent centered Gaussian processes with a common state-dependent volatility and different contraction rates. We also determine the invariant distribution of the first-order component, derive explicit stationary second moments, and obtain a closed-form correlation formula that recovers interaction information not visible in the synchronized first-order limit.

\end{abstract}

\keywords{stochastic approximation \and exact random centering \and  functional limit theorem  \and switching diffusion \and telegraph process \and piecewise deterministic Markov process}

\section{Introduction}
\label{seccion_introduction}

Fluctuation limits for stochastic approximation are commonly obtained by centering the recursion at a deterministic equilibrium or at an almost surely convergent random target. Once this first-order behavior has been removed, the remaining martingale noise can often be analyzed at the square-root scale. This approach becomes inadequate when the driving environment evolves on the same asymptotic scale as the stochastic-approximation gain. In that case, the environmental response may survive in the first-order limit as a nondegenerate stochastic process. Deterministic centering then leaves an order-one component that dominates the intrinsic fluctuations, while direct centering at the weak continuous-time limit does not provide the approximation accuracy required at the square-root scale.

The purpose of this paper is to resolve this centering problem for a pair of interacting reinforced occupation processes subject to critical Markov switching. The central device is an exact discrete random centering that reproduces the accumulated environmental response of the original recursion. Subtracting this response before rescaling separates the persistent environmental dynamics from the intrinsic martingale fluctuations and provides the basis for the second-order analysis developed below.

Reinforcement and environmental persistence are distinct mechanisms through which stochastic systems retain information from the past. Reinforced random walks and P\'olya urns are classical models in which past observations influence future sampling probabilities \cite{Pemantle_90,Pemantle_07}. Their asymptotic analysis is closely connected with stochastic approximation, martingale methods, and techniques from dynamical systems \cite{Benaim_1999,Pemantle_07}.

Interacting reinforced processes provide a natural framework for repeated binary decisions in which individual behavior is shaped both by past experience and by interaction with other components \cite{Crimaldi_IR_19,Aletti_G_2020}. Such mechanisms may be interpreted, for instance, in models of collective choice, coordination, or opinion formation \cite{Crimaldi_2023}, where repeated decisions reinforce previous tendencies while social interaction may promote alignment across individuals. When all components are additionally exposed to a common changing environment, however, similarity between their occupation proportions may arise not only from interaction but also from shared external forcing. Distinguishing these two mechanisms is therefore important, since apparent agreement at the first-order level need not reveal whether it is primarily associated with cross-interaction or with a persistent common environment. This motivates the study of whether the corresponding fluctuation structure can retain information that is not visible from the synchronized first-order behavior.

The critical phenomenon considered here arises from the balance between the stochastic-approximation gain $r_n\sim n^{-1}$ and the switching probability $q_n\sim c n^{-1}$ of a common Markov environment. At this critical switching scale, the environmental response remains dynamic in the first-order limit rather than averaging out or freezing. Consequently, deterministic centering does not remove the order-one environmental contribution, which dominates the intrinsic square-root fluctuations. This leads naturally to the exact random-centering procedure developed below and, ultimately, to a hybrid PDMP--diffusion limit rather than an autonomous Gaussian fluctuation process.

These questions are closely related to the literature on synchronization and fluctuations in interacting reinforced systems. Mean-field synchronization was studied in \cite{Dai_Pra_14}, while fluctuations around the synchronized regime were analyzed in \cite{Crimaldi_Irene_16}. Network-based interaction models and their asymptotic behavior were developed in \cite{Aletti_Gia_17,Aletti_19}, whereas functional fluctuation limits for interacting reinforced random walks were established in \cite{Crimaldi_IR_19}. Related second-order analyses have also been used to develop statistical procedures for estimating interaction structures and strengths \cite{Aletti_G_2020,Aleti_G_2026}. Subsequent work has addressed nonlinear reinforcement and possible failure of synchronization \cite{Crimaldi_2023}, complete first-order classifications on general networks \cite{Aletti_GIO_2024}, mechanisms combining individual and collective reinforcement \cite{Mirebrahimi_Mae_24}, phase transitions in nonlinear or power-reinforced interacting urns \cite{Costa_Jordan_22,Xia_26}, and competitive or cooperative interaction structures \cite{Aleandri_DaiPra_26}. The question addressed here is how this fluctuation structure is modified when the interacting components are driven by a common environment whose switching rate decreases over time.

A complementary line of research concerns stochastic approximation driven by time-inhomogeneous Markov noise. Decreasing-step tracking under Markov noise with slowly varying statistics is studied in \cite{Tsitsiklis_03}. For constant-step schemes with regime switching, continuous-time interpolations may converge to switching ordinary differential equations, while rescaled tracking errors converge to switching diffusions \cite{YinKris_04}. Related limits for least-mean-square algorithms are obtained in \cite{YinKris_05}. Central limit theorems in this setting are typically formulated after convergence to a stable target and under ergodicity assumptions that permit a Poisson-equation decomposition of the Markov noise \cite{Fort_15}.

When the off-diagonal transition probabilities of a finite-state Markov chain decay as $n^{-\gamma}$, its occupation behavior exhibits averaging for $0<\gamma<1$, a nondegenerate critical regime for $\gamma=1$, and freezing for $\gamma>1$ \cite{Dietz_Seth_07,Bouguet_Cloez_18}. Related changes of scale occur in the coin-turning model \cite{Englan_Volk_18}; at the critical order $n^{-1}$, the associated walk has a nonclassical continuous, piecewise-linear limit with Beta-distributed marginals \cite{Englan_Volk_20}. More generally, the theory of asymptotic pseudotrajectories connects time-inhomogeneous Markov chains with homogeneous continuous-time Markov processes \cite{Benaim_Bouguet_17}. For freezing empirical measures, this leads to exponential zig-zag piecewise deterministic Markov processes and functional convergence \cite{Bouguet_Cloez_18}.

The setting considered here lies at the intersection of these two lines of research. The spectral gap of the environmental transition kernel vanishes at the same order as the stochastic-approximation gain. Hence, the environment neither averages out nor freezes on the logarithmic time scale; instead, it generates a nondegenerate stochastic first-order limit. This is precisely why the conventional fluctuation argument breaks down. Deterministic centering retains an order-one environmental component, and direct substitution of the weak PDMP limit is not sufficient because weak convergence alone does not control the centering error at the square-root scale.

We consider two reinforced occupation processes on $\{0,1\}$ evolving in a common two-state Markov environment. For $i\in\{1,2\}$, let $X_n^i$ denote the occupation proportion of state $1$ in component $i$, and let $E_n\in\{-1,+1\}$ denote the environmental sign. The conditional update of each component depends on its own occupation proportion with coefficient $\rho$, on that of the other component with coefficient $\kappa$, and on the common forcing $\delta E_n$. The environment changes sign with probability $q_n$, where $n^\gamma q_n\longrightarrow c>0$. Thus, the reciprocal switching scale $q_n^{-1}$ is of order $n^\gamma$, whereas the occupation proportions evolve with a gain of order $1/n$.

The coordinates
$$
U_n=\frac{X_n^1+X_n^2}{2}-\frac12,\qquad
V_n=\frac{X_n^1-X_n^2}{2}
$$
yield an exact modal decomposition. The collective coordinate $U_n$ contains the environmental forcing and has contraction parameter $a=1-\rho-\kappa$, whereas the synchronization coordinate $V_n$ contains no direct environmental forcing and has contraction parameter $b=1-\rho+\kappa$. In particular, $V_n\to0$ in every switching regime, so the first-order transition is determined by the collective mode.

As a first-order classification, we establish an averaging--critical--freezing trichotomy. In the averaging regime, both occupation proportions converge to $1/2$. In the freezing regime, the environment eventually becomes constant and selects the common limiting level. At the critical scale, the logarithmic-time interpolation of $(U_n,E_n)$ converges to a stationary PDMP $(\mathcal U,\mathcal E)$, where $\mathcal E$ is a symmetric telegraph process and
$$
\frac{d\mathcal U(t)}{dt}
=
-a\mathcal U(t)+\delta\mathcal E(t)
$$
between successive environmental switches. After an affine transformation and a deterministic time change, this process belongs to the exponential zig-zag framework of \cite{Bouguet_Cloez_18} and is closely related to the critical coin-turning limit of \cite{Englan_Volk_20}. We provide a self-contained derivation and determine its invariant distribution. In particular, the invariant marginal distribution of $\mathcal U$ is an affine transformation of a symmetric Beta distribution. This classification supplies the first-order background required for the subsequent fluctuation analysis.

The main result concerns the residual sampling fluctuations in the critical switching regime. We introduce the exact discrete environmental response $\bar U_n$ and the deterministic synchronization response $\bar V_n$. The residuals $Y_n^U=U_n-\bar U_n$ and $Y_n^V=V_n-\bar V_n$ are driven solely by the centered Bernoulli innovations. Unlike centering at a deterministic equilibrium or directly at the limiting PDMP, this construction is exact at the discrete level and isolates the fluctuations relevant to the second-order analysis.

Under the contraction condition $a>1/2$, and hence also $b>1/2$, the square-root-normalized residual coordinates converge jointly with the first-order background. The limiting system consists of $(\mathcal U,\mathcal E)$ and two Ornstein--Uhlenbeck-type modes satisfying
$$
\begin{aligned}
d\mathcal Z^U(t)
&=
-\left(a-\frac12\right)\mathcal Z^U(t)\,dt
+\sigma\bigl(\mathcal U(t),\mathcal E(t)\bigr)\,dW^U(t),\\
d\mathcal Z^V(t)
&=
-\left(b-\frac12\right)\mathcal Z^V(t)\,dt
+\sigma\bigl(\mathcal U(t),\mathcal E(t)\bigr)\,dW^V(t).
\end{aligned}
$$
Here $W^U$ and $W^V$ are independent two-sided Brownian motions, independent of the telegraph process. The full limiting system is constructed from the remote logarithmic past through two-sided stochastic convolutions, which identify the stationary entrance law inherited from the complete prelimit history.

Conditionally on the complete environmental path, the two fluctuation modes are independent centered Gaussian processes with a common state-dependent volatility but different contraction rates. We derive their stationary second moments and show that the stationary contemporaneous correlation between the centered fluctuations of the original components is
$$
\chi=\frac{b-a}{a+b-1}
=\frac{2\kappa}{1-2\rho}.
$$
The first-order synchronized dynamics depend on $(\rho,\kappa)$ only through $a$, whereas the correlation also contains the difference between the contraction rates. Consequently,
$$
\kappa=\frac{\chi(2a-1)}{2(1-\chi)}
\qquad\text{and}\qquad
\rho=1-a-\kappa.
$$
Thus, at the level of the limiting covariance structure, the pair $(a,\chi)$ provides an asymptotic identifiability relation for the self-interaction and cross-interaction coefficients within the admissible parameter region.

The analytical contribution is threefold. First, the exact discrete environmental response provides a random centering that separates persistent environmental motion from intrinsic innovations without requiring deterministic first-order convergence. Second, the joint functional limit distinguishes the collective and synchronization modes within a stationary hybrid PDMP--diffusion system whose stationary entrance law is represented by two-sided stochastic convolutions. Third, the explicit covariance structure shows that second-order fluctuations recover interaction information that is not visible in the synchronized first-order limit.

The paper is organized as follows. Section \ref{seccion_model} introduces the stochastic recursions and their collective--synchronization decomposition, states the main results, and collects the preliminary estimates. Section \ref{seccion_first-order-regimes} establishes the first-order limits under the competition between the stochastic-approximation and environmental time scales: Subsections \ref{seccion_noncritical} and \ref{seccion_critical} treat the averaging and freezing regimes and the critical switching regime, respectively. Section \ref{seccion_second-order} develops the exact random centering, proves the hybrid functional central limit theorem, and derives the stationary covariance formulas. Section \ref{seccion_conclusion} presents the concluding remarks. Appendix \ref{appendice_invariant-law} contains the proof of the invariant-law formula.

Throughout, for a Polish state space $\mathsf E$, the Skorokhod space $D([s,t],\mathsf E)$ is endowed with the $J_1$ topology. All equalities and inequalities involving conditional expectations or conditional probabilities are understood to hold $\mathbb P$-almost surely. The symbol $C$ denotes a finite positive constant, independent of the asymptotic indices, whose value may change from line to line; a subscript, as in $C_T$ or $C_f$, indicates possible dependence on the corresponding fixed quantity.

\section{Stochastic recursions, modal decomposition, and main results}
\label{seccion_model}

\subsection{Reinforced stochastic recursions}

We now give the formal construction. For each $i\in\{1,2\}$, let $\{A_n^i:n\geq1\}$ be the sequence of states visited by process $i$, with $A_n^i\in\{0,1\}$, and let $X_n^i$ denote its reinforced occupation proportion of state $1$. Given $\nu>0$ and deterministic initial values $X_0^1,X_0^2\in[0,1]$, write $r_n=(n+\nu+1)^{-1}$. For every $n\geq0$, set
\begin{equation}\label{ecua_occupation-recursion}
X_{n+1}^i=X_n^i+r_n\left(A_{n+1}^i-X_n^i\right),\qquad i=1,2.
\end{equation}
Equivalently,
$$X_n^i=\frac{\nu X_0^i+\sum_{k=1}^n A_k^i}{n+\nu}.$$

Let $E_0\in\{-1,+1\}$ have an arbitrary distribution, and let $\{B_n:n\geq1\}$ be independent Bernoulli variables, independent of $E_0$, with $\Pb(B_n=1)=q_n$. The environment is defined by $E_{n+1}=E_n(1-2B_{n+1})$. Thus, $B_{n+1}=1$ indicates a switch between the two environmental states. Let $\F_n=\sigma\left(X_0^1,X_0^2,E_0, A_k^1,A_k^2,B_k:1\leq k\leq n\right)$. Conditionally on $\F_n$, the variables $A_{n+1}^1$ and $A_{n+1}^2$ are independent Bernoulli variables, independent of $B_{n+1}$, with
\begin{align}
\Pb(A_{n+1}^1=1\mid\F_n)&=\frac12+\rho\left(X_n^1-\frac12\right)+\kappa\left(X_n^2-\frac12\right)+\delta E_n,\label{ecua_transition-walk1}\\
\Pb(A_{n+1}^2=1\mid\F_n)&=\frac12+\rho\left(X_n^2-\frac12\right)+\kappa\left(X_n^1-\frac12\right)+\delta E_n.\label{ecua_transition-walk2}
\end{align}
Here $\rho,\kappa\geq0$ and $\delta>0$. Throughout the paper, we assume
\begin{equation}\label{ecua_admissibility}
\rho+\kappa+2\delta<1.
\end{equation}
Indeed, since $X_n^i-1/2\in[-1/2,1/2]$, the absolute displacement of each conditional probability from $1/2$ is at most $(\rho+\kappa)/2+\delta<1/2$. Hence, the probabilities in \eqref{ecua_transition-walk1}--\eqref{ecua_transition-walk2} belong to $(0,1)$.

\begin{assumption}\label{sup_switching}
There exist constants $c>0$ and $\gamma>0$ such that $0<q_n\leq 1/2$ and $n^\gamma q_n\longrightarrow c$ as $n\to\infty$.
\end{assumption}

\subsection{Collective and synchronization modes}
Recall the coordinates announced in the Introduction:
\begin{equation}\label{ecua_UV-definition}
U_n=\frac{X_n^1+X_n^2}{2}-\frac12,\qquad V_n=\frac{X_n^1-X_n^2}{2}.
\end{equation}
Their inverse transformation is
\begin{equation}\label{ecua_inverse-UV}
X_n^1=\frac12+U_n+V_n,\qquad X_n^2=\frac12+U_n-V_n.
\end{equation}

Set
\begin{equation}\label{ecua_a-b-definition}
a=1-\rho-\kappa\qquad\text{and}\qquad b=1-\rho+\kappa.
\end{equation}
Condition \eqref{ecua_admissibility} implies that $0<a\leq1$, $a>2\delta$, and $b=a+2\kappa>0$. In particular, $0<\delta/a<1/2$, and hence the two environmentally selected occupation levels $1/2-\delta/a$ and $1/2+\delta/a$ belong to the interior of $(0,1)$.

Define
\begin{align*}
\Delta M_{n+1}^{U}&=\frac{A_{n+1}^1+A_{n+1}^2}{2}-\Eb\left(\left.\frac{A_{n+1}^1+A_{n+1}^2}{2}\right|\F_n\right),\\
\Delta M_{n+1}^{V}&=\frac{A_{n+1}^1-A_{n+1}^2}{2}-\Eb\left(\left.\frac{A_{n+1}^1-A_{n+1}^2}{2}\right|\F_n\right).
\end{align*}
These variables satisfy $$\Eb\left[\Delta M_{n+1}^{U}\mid\F_n\right]=\Eb\left[\Delta M_{n+1}^{V}\mid\F_n\right]=0,$$ and $|\Delta M_{n+1}^{U}|\leq1$, $|\Delta M_{n+1}^{V}|\leq1$.

The following proposition gives the fundamental decomposition of the system.
\begin{proposition} \label{propo_UV-recursions}
For every $n\geq0$, the collective and synchronization coordinates satisfy
\begin{align}
U_{n+1}&=U_n+r_n\left(-aU_n+\delta E_n+\Delta M_{n+1}^{U}\right),\label{ecua_U-recursion}\\
V_{n+1}&=V_n+r_n\left(-bV_n+\Delta M_{n+1}^{V}\right).\label{ecua_V-recursion}
\end{align}
\end{proposition}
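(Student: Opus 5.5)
The decomposition is a direct algebraic consequence of the occupation recursion \eqref{ecua_occupation-recursion} and the linear structure of the conditional probabilities \eqref{ecua_transition-walk1}--\eqref{ecua_transition-walk2}. We therefore verify it by a short computation rather than any probabilistic argument. We take half the sum and half the difference of the two recursions. Using the definitions \eqref{ecua_UV-definition}, and noting that $\frac12$ is a fixed point of $x\mapsto x+r_n(\frac12-x)$ shifted appropriately, this gives
$$
U_{n+1}=U_n+r_n\Bigl(\tfrac{A_{n+1}^1+A_{n+1}^2}{2}-\tfrac12-U_n\Bigr),\qquad
V_{n+1}=V_n+r_n\Bigl(\tfrac{A_{n+1}^1-A_{n+1}^2}{2}-V_n\Bigr).
$$

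Next we split each innovation into its conditional mean and the martingale increment. By the very definitions of $\Delta M_{n+1}^U$ and $\Delta M_{n+1}^V$, we have
$$
\tfrac{A_{n+1}^1+A_{n+1}^2}{2}=\Eb\Bigl(\tfrac{A_{n+1}^1+A_{n+1}^2}{2}\Bigm|\F_n\Bigr)+\Delta M_{n+1}^U,
$$
and similarly for the difference. So it remains to compute the two conditional means in terms of $U_n,V_n,E_n$. Averaging \eqref{ecua_transition-walk1} and \eqref{ecua_transition-walk2} gives
$$
\frac12+\frac{\rho+\kappa}{2}\Bigl(X_n^1+X_n^2-1\Bigr)+\delta E_n=\frac12+(\rho+\kappa)U_n+\delta E_n.
$$
Half their difference gives
$$
\frac{\rho-\kappa}{2}(X_n^1-X_n^2)=(\rho-\kappa)V_n,
$$
where the $\delta E_n$ terms cancel.

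Substituting these expressions, the drift of $U$ becomes $(\rho+\kappa)U_n+\delta E_n-U_n=-aU_n+\delta E_n$. The drift of $V$ becomes $(\rho-\kappa)V_n-V_n=-bV_n$. Both follow from \eqref{ecua_a-b-definition}, and they yield \eqref{ecua_U-recursion}--\eqref{ecua_V-recursion}.

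There is no genuine obstacle here. The only point needing care is bookkeeping in the first step: the constant $\frac12$ must be subtracted consistently, so that $U_{n+1}+\frac12$ obeys the same recursion as the $X^i$. Also note that the cross-coefficient $\kappa$ enters with opposite signs in the two modes, which is exactly what produces the distinct contraction parameters $a$ and $b$.
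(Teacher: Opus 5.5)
Your proof is correct and follows the paper's argument: you compute the half-sum and half-difference of the two conditional Bernoulli parameters, obtaining $\frac12+(\rho+\kappa)U_n+\delta E_n$ and $(\rho-\kappa)V_n$, and substitute them into the averaged occupation recursion to read off the drifts $-aU_n+\delta E_n$ and $-bV_n$. You also write out the intermediate recursions for $U_n$ and $V_n$ explicitly, which the paper leaves implicit; your remark about the fixed point is loosely phrased but harmless.
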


\begin{proof}
Adding and subtracting \eqref{ecua_transition-walk1} and \eqref{ecua_transition-walk2} gives
\begin{align*}
\Eb\left(\left.\frac{A_{n+1}^1+A_{n+1}^2}{2}\right|\F_n\right)&=\frac12+(\rho+\kappa)U_n+\delta E_n,\\
\Eb\left(\left.\frac{A_{n+1}^1-A_{n+1}^2}{2}\right|\F_n\right)&=(\rho-\kappa)V_n.
\end{align*}
Substitution into \eqref{ecua_occupation-recursion}, together with \eqref{ecua_UV-definition} and \eqref{ecua_a-b-definition}, yields \eqref{ecua_U-recursion}--\eqref{ecua_V-recursion}.
\end{proof}

As anticipated in the Introduction, \eqref{ecua_U-recursion}--\eqref{ecua_V-recursion} separate the collective and synchronization mechanisms. The recursion for $V_n$ is autonomous, whereas $U_n$ contains the forcing term $\delta E_n$; hence the environmental phase transition is determined by \eqref{ecua_U-recursion}.
\subsection{First-order asymptotics}
For $t\geq0$, define
\begin{equation} \label{ecua_log-interpolations}
\mathcal U_n(t)=U_{\lfloor ne^t\rfloor},\qquad \mathcal E_n(t)=E_{\lfloor ne^t\rfloor},\qquad \text{and} \qquad\mathcal X_n^i(t)=X_{\lfloor ne^t\rfloor}^i,\quad i=1,2.
\end{equation}
The state space of the limiting critical process is
$$\mathsf{S}=\left[-\frac{\delta}{a},\frac{\delta}{a}\right]\times\{-1,+1\}.$$

\begin{theorem}\label{teo_main}
Suppose that \eqref{ecua_admissibility} and Assumption \ref{sup_switching} hold.
\begin{enumerate}
\item [(i)] If $0<\gamma<1$, then
$$(X_n^1,X_n^2)\longrightarrow\left(\frac12,\frac12\right)\qquad\text{almost surely and in }L^2,$$
where the $L^2$ statement means that
$$\Eb\left[\left|X_n^1-\frac12\right|^2+\left|X_n^2-\frac12\right|^2\right]\longrightarrow0.$$
\item [(ii)] If $\gamma=1$, then for every $T>0$, the logarithmic-time
interpolations defined in \eqref{ecua_log-interpolations} satisfy
\begin{equation}\label{ecua_main-critical-limit}
(\mathcal U_n,\mathcal E_n)\Longrightarrow (\mathcal U,\mathcal E)
\end{equation}
in $D([0,T],\Rb\times\{-1,+1\})$, where $(\mathcal U,\mathcal E)$ is the stationary PDMP on $\mathsf S$ whose generator acts on the restrictions to $\mathsf S$ of functions $\varphi$ satisfying $\varphi(\cdot,e)\in C_b^1(\Rb)$ for each
$e\in\{-1,+1\}$ as
\begin{equation}\label{ecua_main-generator}
\mathcal L\varphi(u,e)=(-au+\delta e)\partial_u\varphi(u,e)+c\,[\varphi(u,-e)-\varphi(u,e)].
\end{equation}
The stationary initialization is constructed in Proposition \ref{propo_critical-marginal-limit}, and its invariant distribution is characterized explicitly in Proposition \ref{propo_critical-invariant-distribution}. Moreover,
\begin{equation}\label{ecua_main-critical-occupation-limit}
(\mathcal X_n^1,\mathcal X_n^2,\mathcal E_n)\Longrightarrow\left(\frac12+\mathcal U,\frac12+\mathcal U,\mathcal E\right)
\end{equation}
in $D([0,T],\Rb^2\times\{-1,+1\})$. In particular,
$$(X_n^1,X_n^2)\Longrightarrow \left(\frac12+U^{(c)},\frac12+U^{(c)}\right),$$
where $U^{(c)}$ has the invariant $\mathcal U$-marginal of the limiting process, characterized explicitly in Proposition \ref{propo_critical-invariant-distribution}.
\item [(iii)] If $\gamma>1$, there exists $E_\infty\in\{-1,+1\}$ such that $E_n=E_\infty$ eventually almost surely and
$$(X_n^1,X_n^2)\longrightarrow\left(\frac12+\frac{\delta}{a}E_\infty,\frac12+\frac{\delta}{a}E_\infty\right)\qquad\text{almost surely}.$$
\end{enumerate}
\end{theorem}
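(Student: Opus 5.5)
The proof reduces everything to the modal recursions of Proposition \ref{propo_UV-recursions}. The $V$-recursion is autonomous and contracting with $b>0$, so it can be handled once for all three regimes. The whole phase transition is then carried by the $U$-recursion, with forcing $\delta E_n$. Write $U_n=\bar U_n+Y_n^U$, where
$$\bar U_{n+1}=\bar U_n+r_n(-a\bar U_n+\delta E_n),\qquad Y_{n+1}^U=(1-ar_n)Y_n^U+r_n\Delta M_{n+1}^U .$$

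\emph{Martingale parts.} For $V_n$ and $Y_n^U$ the step is a contraction with a martingale increment. Set $\Pi_{k,n}=\prod_{j=k}^{n-1}(1-ar_j)\asymp (k/n)^a$. Then
$$\Eb|Y_n^U|^2\le C\sum_k r_k^2\Pi_{k+1,n}^2=O(n^{-\min(1,2a)}\log n),$$
so $Y_n^U\to0$ in $L^2$. Almost sure convergence follows from the standard Robbins--Siegmund argument: $\sum r_n^2<\infty$ and $\sum r_n=\infty$. The same bound gives $V_n\to0$ almost surely and in $L^2$. This step also yields the term $\mathcal X^1_n-\mathcal X^2_n=2V_{\lfloor ne^t\rfloor}\to0$ uniformly on $[0,T]$, because $\sup_{m\ge n}|V_m|\to0$ almost surely. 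It therefore remains to study the deterministic-given-$E$ filter $\bar U_n=\sum_k r_k\Pi_{k+1,n}\delta E_k+\Pi_{0,n}\bar U_0$.

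\emph{Regime (i), $\gamma<1$.} I would show $\bar U_n\to0$. Write $\Eb[E_{k}\mid E_j]=E_j\prod_{i=j+1}^{k}(1-2q_i)$; this factor decays like $\exp(-2c(k^{1-\gamma}-j^{1-\gamma})/(1-\gamma))$, so the correlation length near $k$ is of order $k^\gamma\ll k$. Expanding $\Eb\bar U_n^2$ as a double sum over $k,l$, the weights $r_k\Pi_{k+1,n}$ are of order $n^{-1}$ over a window of length about $n$. The covariance therefore gives
$$\Eb\bar U_n^2\lesssim n^{-2}\sum_{k\le n}k^\gamma\to0,$$
which is the $L^2$ claim. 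For almost sure convergence I would use a blocking argument along $n_m=\lfloor \theta^m\rfloor$, combined with Borel--Cantelli on fourth moments. A fourth moment is needed because the $L^2$ rate $n^{\gamma-1}$ is summable along geometric subsequences anyway, so Chebyshev with the second moment suffices along $n_m$. Between $n_m$ and $n_{m+1}$ the oscillation is controlled by $|\bar U_{n+1}-\bar U_n|\le Cr_n$, giving a total change of $O(\theta-1)$; one then lets $\theta\downarrow1$. This oscillation step is the least clean one: an $O(\theta-1)$ remainder does not vanish for fixed $\theta$. So I may instead apply the same averaging bound to windowed sums $\sum_{k=n_m}^{n}r_kE_k$.

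\emph{Regime (iii), $\gamma>1$.} Here $\sum q_n<\infty$, so by Borel--Cantelli $E_n=E_\infty$ eventually. After that time the filter $\bar U_n$ satisfies a deterministic linear recursion with fixed point $\delta E_\infty/a$, and it converges to that point since $\sum r_n=\infty$. Together with $Y^U_n,V_n\to0$ this gives the claim.

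\emph{Regime (ii), $\gamma=1$, the main obstacle.} The key observation is that in logarithmic time $s=\log(m/n)$, $\bar U$ is an Euler scheme with steps $r_m\approx ds$ for the ODE $\dot u=-au+\delta e$. Moreover, $E$ switches at rate $q_m\cdot m\,ds\to c\,ds$, so $\mathcal E_n$ converges to a rate-$c$ telegraph process. I would proceed as follows.

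\begin{enumerate}
\item Tightness in $D$. The switches of $\mathcal E_n$ on $[0,T]$ are dominated by a Poisson-type count with mean $\approx cT$, and $\mathcal U_n$ is Lipschitz in $t$ up to $O(1/n)$ jumps plus $\sup|Y^U|$.
\item Identification via the martingale problem. For $\varphi$ as in \eqref{ecua_main-generator}, $\varphi(U_m,E_m)-\sum_{k<m} r_k\mathcal L_k\varphi$ is a martingale up to errors of $o(1)$ on the log-time window. Here $\mathcal L_k\to\mathcal L$ uses $kq_k\to c$ and Taylor expansion, and the term $r_k\Delta M^U$ contributes only $O(\sum r_k^2)$ to the quadratic variation.
\item Uniqueness. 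The PDMP martingale problem with bounded jump rate and Lipschitz flow is well posed.
\end{enumerate}

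Stationarity is the delicate point, since the initial law at time $0$ is the law of $(U_n,E_n)$, which is not a priori stationary. I would handle it by starting the interpolation at $ne^{-S}$ for large $S$ and coupling. Two PDMP copies driven by the same telegraph path contract at rate $e^{-aS}$, and the telegraph marginal mixes exponentially. Hence any subsequential limit of $(U_n,E_n)$ is the invariant law of Proposition \ref{propo_critical-marginal-limit}, which gives the stationary limit on $[0,T]$. Support in $\mathsf S$ follows since the flow preserves $[-\delta/a,\delta/a]$ and $Y^U\to0$. Finally, \eqref{ecua_main-critical-occupation-limit} follows from \eqref{ecua_inverse-UV} and $V_n\to0$ via the continuous mapping theorem, and the one-dimensional statement follows by evaluating at $t=0$.
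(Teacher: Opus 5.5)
\paragraph{Part (ii): a different route from the paper's.} Your treatment of the martingale parts and of regimes (i) and (iii) follows the paper. For (ii), however, you take a genuinely different route.

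\emph{The paper's route.} It does not identify the first-order limit through a martingale problem. Instead it:
\begin{enumerate}
\item shows that $E_{\lfloor ne^t\rfloor}$, $t\in[-T,T]$, converges to a two-sided telegraph process, using Le Cam's inequality and Aldous's criterion;
\item truncates $H_n$ at $j_n(S)=\lfloor ne^{-S}\rfloor$, using the deterministic tail bound \eqref{ecua_critical-tail-bound};
\item recognizes the truncated sum as a logarithmic Riemann sum converging, jointly with the future environment, to $\delta\int_{-S}^0e^{as}\widetilde{\mathcal E}(s)\,ds$;
\item obtains the functional limit from Gronwall's inequality and the continuity of the linear filter map $\Phi$ at telegraph paths.
\end{enumerate}
Stationarity is then read off from the two-sided representation.

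\emph{Your route.} You identify limit points through the well-posed PDMP martingale problem. You get the entrance law by restarting at $\lfloor ne^{-S}\rfloor$ and using uniform forgetting of the initial law. Thus every subsequential limit of the law of $(U_n,E_n)$ is, for every $S$, the time-$S$ law of the PDMP started from some law on a fixed compact set, and hence equals the unique invariant law.

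\emph{Comparison.} Your argument is sound and more robust: it does not use linearity of the drift, and it treats the environment and the collective coordinate in one stroke. One detail: to get $J_1$-tightness of $\mathcal E_n$, you should supplement the bound on the mean number of switches with a no-accumulation estimate such as Aldous's criterion. The cost of your route is threefold.
\begin{itemize}
\item It gives the limit only in law.
\item It needs existence, uniqueness, and uniform-in-the-initial-law convergence to the invariant law as separate inputs; this is essentially the coupling of Appendix \ref{appendice_invariant-law}.
\item It does not by itself produce the pathwise representation $U^{(c)}=\delta\int_{-\infty}^0e^{as}\widetilde{\mathcal E}(s)\,ds$ jointly with the whole environment. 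The statement refers to this representation via Proposition \ref{propo_critical-marginal-limit}, and the second-order analysis reuses it. You would recover it only afterwards, by noting that the two-sided filter is a stationary solution and invoking uniqueness.
\end{itemize}
The paper's route also shows that the forgetting step needs no telegraph coupling. Because the environment is common, two discrete filters started at $j_n(S)$ and driven by the same signs differ by at most $CP_{j_n(S),n}\le Ce^{-aS}$.

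\paragraph{A computational slip in part (i).} For $k\ll n$ the weights satisfy $r_k\Pi_{k+1,n}\asymp k^{a-1}n^{-a}$, not $n^{-1}$. The correct bound is therefore $\Eb\bar U_n^2\le Cn^{-2a}(1+\sum_{k\le n}k^{2a+\gamma-2})=O(n^{-\min\{2a,1-\gamma\}}\log n)$. Your displayed bound $n^{-2}\sum_k k^\gamma$ is false when $2a+\gamma\le1$.

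The rate is still polynomial, so Chebyshev along $\lfloor\theta^m\rfloor$ suffices and no fourth moments are needed. The oscillation step you flagged is also fine. For fixed $\theta$ it gives $\limsup_n|\bar U_n|\le C\log\theta$ almost surely, and one then intersects over a sequence $\theta_j\downarrow1$. Alternatively, take $n_j=\lfloor j^\zeta\rfloor$ with $\zeta\min\{2a,1-\gamma\}>1$, as the paper does, so that $n_{j+1}/n_j\to1$.
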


\subsection{Second-order main result}

The critical first-order limit is random, so centering at a deterministic equilibrium would mix environmental motion with intrinsic sampling noise. Instead, define the exact discrete responses $\bar U_0=U_0$, $\bar U_{n+1}=\bar U_n+r_n(-a\bar U_n+\delta E_n)$, $\bar V_0=V_0$, and $\bar V_{n+1}=(1-br_n)\bar V_n$. For $m_n(t)=\lfloor ne^t\rfloor$, put
\begin{align*}
\bar{\mathcal U}_n(t)=\bar U_{m_n(t)},\qquad \mathcal Z_n^U(t)=\sqrt{m_n(t)}\,[U_{m_n(t)}-\bar U_{m_n(t)}], \qquad\text{and}\qquad \mathcal Z_n^V(t)=\sqrt{m_n(t)}\,[V_{m_n(t)}-\bar V_{m_n(t)}].
\end{align*}

Finally, for $(u,e)\in[-1/2,1/2]\times\{-1,+1\}$, define
\begin{equation}\label{ecua_variance-function}
p(u,e)=\frac12+(1-a)u+\delta e
\qquad\text{and}\qquad
\sigma^2(u,e)=\frac12p(u,e)[1-p(u,e)].
\end{equation}
By \eqref{ecua_admissibility},
$$|(1-a)u+\delta e|\leq \frac{1-a}{2}+\delta=\frac{\rho+\kappa}{2}+\delta<\frac12.$$
Consequently, $p(u,e)\in(0,1)$ throughout the feasible coordinate range. Since $b=a+2\kappa$, the condition
$a>1/2$ also implies $b>1/2$.

\begin{theorem}\label{teo_hybrid-fluctuation-limit}
Suppose that \eqref{ecua_admissibility} holds and that Assumption \ref{sup_switching} is satisfied with $\gamma=1$. If
$a>1/2$, then for every $T>0$, $\left(\bar{\mathcal U}_n,\mathcal E_n,\mathcal Z_n^U,\mathcal Z_n^V\right)\Longrightarrow\left(\mathcal U,\mathcal E,\mathcal Z^U,\mathcal Z^V\right)$ in $D\bigl([0,T],\Rb\times\{-1,+1\}\times\Rb^2\bigr).$

The limiting process is characterized as follows. The environmental coordinate $\mathcal E$ is a stationary symmetric telegraph process with switching rate $c$. Let $W^U$ and $W^V$ be independent two-sided Brownian motions, independent of $\mathcal E$. The continuous coordinates satisfy
\begin{align}
d\mathcal U(t)&=\bigl[-a\mathcal U(t)+\delta\mathcal E(t)\bigr]\,dt,\label{ecua_limit-SDE-U}\\
d\mathcal Z^U(t)&=-\left(a-\frac12\right)\mathcal Z^U(t)\,dt+\sigma\bigl(\mathcal U(t),\mathcal E(t)\bigr)\,dW^U(t),\label{ecua_limit-SDE-ZU}\\
d\mathcal Z^V(t)&=-\left(b-\frac12\right)\mathcal Z^V(t)\,dt+\sigma\bigl(\mathcal U(t),\mathcal E(t)\bigr)\,dW^V(t).\label{ecua_limit-SDE-ZV}
\end{align}

This switching system, defined by \eqref{ecua_limit-SDE-U}--\eqref{ecua_limit-SDE-ZV}, admits a stationary weak solution that is unique in law. Its stationary law is characterized by the two-sided representations \eqref{ecua_stationary-filter-representation}, \eqref{ecua_stationary-ZU-two-sided}, and \eqref{ecua_stationary-ZV-two-sided}. Conditional on the complete path of $\mathcal E$, the processes $\mathcal Z^U$ and $\mathcal Z^V$ are independent centered Gaussian processes. Without conditioning, however, they need not be independent, since their conditional variance profiles depend on the same environmental path.
\end{theorem}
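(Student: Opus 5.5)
The proof has three steps. We write the residual recursions exactly. We then prove joint convergence on a finite window starting at logarithmic time $-S$ and let $S\to\infty$ to identify the stationary entrance law. Finally, we read off the conditional Gaussian structure from the two-sided representation.

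\textbf{Step 1: exact residual recursions.} Subtracting the recursion for $\bar U_n$ from \eqref{ecua_U-recursion}, and the recursion for $\bar V_n$ from \eqref{ecua_V-recursion}, gives
\[
Y^U_{n+1}=(1-ar_n)Y^U_n+r_n\Delta M^U_{n+1},\qquad Y^V_{n+1}=(1-br_n)Y^V_n+r_n\Delta M^V_{n+1}.
\]
The environment has cancelled exactly. Let $\Pi^a_{k,n}=\prod_{j=k}^{n-1}(1-ar_j)$, which is asymptotically $(k/n)^a$. Then
\[
\sqrt n\,Y^U_n=\sqrt n\,\Pi^a_{0,n}Y^U_0+\sum_{k<n}\sqrt n\,\Pi^a_{k+1,n}r_k\Delta M^U_{k+1},
\]
and similarly for $V$ with $b$. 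The first term is $O(n^{1/2-a})\to0$ because $a>1/2$. The conditional covariance of $(\Delta M^U_{k+1},\Delta M^V_{k+1})$ is computed from the independence of $A^1_{k+1}$ and $A^2_{k+1}$ with success probabilities $p_i=p(U_k,E_k)\pm(\rho-\kappa)V_k$. Both diagonal entries equal $\tfrac14[p_1(1-p_1)+p_2(1-p_2)]$, and the cross term is $\tfrac14[p_1(1-p_1)-p_2(1-p_2)]$. Since $V_n\to0$ (in $L^2$, at rate $n^{-\min(b,1)}$ plus $n^{-1/2}$ noise), these entries equal $\sigma^2(U_k,E_k)+o(1)$ and $o(1)$ respectively. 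We also need $U_k-\bar U_k=O_P(k^{-1/2})$, which we get by squaring the $Y^U$ recursion. Hence $\sigma^2(U_k,E_k)$ may be replaced by $\sigma^2(\bar U_k,E_k)$.

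\textbf{Step 2: finite-window functional limit.} Fix $S>0$. On logarithmic time $[-S,T]$, write
\[
\mathcal Z^U_n(t)=e^{-(a-1/2)(t+S)}\mathcal Z^U_n(-S)+\text{(stochastic convolution over }[-S,t]).
\]
The convolution is driven by the martingale $\mathcal M^U_n(t)=\sum_{k<m_n(t)}k^{-1/2}\Delta M^U_{k+1}$. Its predictable quadratic variation is
\[
\sum_k k^{-1}\sigma^2(\bar U_k,E_k)\approx\int\sigma^2\bigl(\bar{\mathcal U}_n(s),\mathcal E_n(s)\bigr)\,ds.
\]
Theorem \ref{teo_main}(ii) gives $(\mathcal U_n,\mathcal E_n)\Rightarrow(\mathcal U,\mathcal E)$, and Step 1 transfers this to $(\bar{\mathcal U}_n,\mathcal E_n)$, because $\bar U$ and $U$ differ by $O_P(n^{-1/2})$ uniformly on the window via Doob's inequality. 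We apply the martingale FCLT jointly with the background, in the form of Jacod--Shiryaev's conditional/stable convergence. The quadratic variation then converges in law jointly with $(\bar{\mathcal U}_n,\mathcal E_n)$, the cross-variation vanishes, and jumps are $O(n^{-1/2})$. Consequently $(\bar{\mathcal U}_n,\mathcal E_n,\mathcal M^U_n,\mathcal M^V_n)$ converges to
\[
\Bigl(\mathcal U,\ \mathcal E,\ \int\sigma\,dW^U,\ \int\sigma\,dW^V\Bigr)
\]
with $W^U,W^V$ independent of each other and of $\mathcal E$. Stability is what makes the Brownian motions independent of the environment: the innovations are conditionally orthogonal to $B_{k+1}$ and the environment jumps are macroscopically rare. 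The convolution is a continuous functional, after integration by parts on the smooth kernel $e^{-(a-1/2)(t-s)}$, so the continuous mapping theorem applies. The initial term is bounded in $L^2$ uniformly in $n$, since $\Eb[(\sqrt n Y^U_n)^2]\le C$ by Step 1 with $a>1/2$. Its contribution is therefore at most $Ce^{-(a-1/2)S}$, which is negligible as $S\to\infty$.

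\textbf{Step 3: identification and uniqueness.} Letting $S\to\infty$ yields
\[
\mathcal Z^U(t)=\int_{-\infty}^t e^{-(a-1/2)(t-s)}\sigma\bigl(\mathcal U(s),\mathcal E(s)\bigr)\,dW^U(s),
\]
and similarly for $\mathcal Z^V$ with rate $b-1/2$. The background is
\[
\mathcal U(t)=\delta\int_{-\infty}^t e^{-a(t-s)}\mathcal E(s)\,ds,
\]
driven by a two-sided stationary telegraph process. These are the stated two-sided representations. Two facts follow from this form.
\begin{itemize}
\item[(a)] Stationarity and uniqueness in law. Any stationary solution has bounded $\mathcal U$ and $L^2$-bounded $\mathcal Z$. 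Integrating the equations from $-S$ to $t$ and letting $S\to\infty$ forces this representation, so the law is a functional of $(\mathcal E,W^U,W^V)$ alone.
\item[(b)] Conditional structure. Given the whole path of $\mathcal E$, the integrands are deterministic. The convolutions are therefore independent centered Gaussian processes with the common volatility $\sigma(\mathcal U,\mathcal E)$.
\end{itemize}
Joint tightness in $J_1$ follows from tightness of each coordinate, since $\mathcal Z$ has continuous limits.

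\textbf{Main obstacle.} The hardest step is Step 2: obtaining the joint (stable) convergence of the martingale with the environment. The quadratic variation has a random limit depending on $(\mathcal U,\mathcal E)$, and $\mathcal E$ has jumps. I would first try the Jacod--Shiryaev criterion, checking that $\langle\mathcal M_n,N\rangle\to0$ for bounded martingales $N$ generated by the switching indicators $B_k$. This holds because $\Delta M_{k+1}$ and $B_{k+1}$ are conditionally independent.
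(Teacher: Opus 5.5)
\textbf{Gap: the joint identification in Step 2.} The missing piece is the joint identification of the Gaussian noise with the environment, at exactly the point you single out as the main obstacle.

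The stable (conditional) limit theorems of Jacod--Shiryaev, Section IX.7, need two things: nested filtrations, and convergence \emph{in probability} of the predictable characteristics to a process defined on the original space. The orthogonality test $\langle\mathcal M_n,N\rangle\to0$ supplements that hypothesis; it does not replace it. Here $\langle\mathcal M_n^U\rangle(t)\approx\int_{-S}^t\sigma^2(\bar{\mathcal U}_n(s),\mathcal E_n(s))\,ds$ is a functional of the environment on the moving block $[ne^{-S},ne^t]$. For $n'\gg n$ these blocks decorrelate, so the bracket converges only in law, the same phenomenon as in Corollary~\ref{corol_no-probability-convergence}. There is therefore no limiting $C_t$ on $\Omega$ to which the theorem could be applied.

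Convergence in law of the bracket jointly with the background, together with small jumps and vanishing cross-variation, does not by itself fix the joint law of background and noise. You must also show the following about every subsequential limit $(\mathcal U,\mathcal E,\mathcal M^U,\mathcal M^V)$: the $\mathcal M^\ell$ are continuous martingales with bracket $\int\sigma^2(\mathcal U,\mathcal E)\,ds$, in a common filtration in which $\mathcal E$ is still a rate-$c$ telegraph process. In other words, the prelimit fact $\Pb(B_{m+1}=1\mid\F_m)=q_{m+1}$ must survive the limit. Your heuristic (innovations conditionally independent of $B_{m+1}$, switches rare) is the right mechanism, but the tool you cite does not turn it into a proof.

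\textbf{How to close it.} The paper uses a joint martingale problem. Conditioning on $B_{m+1}$ and Taylor-expanding $f(\bar U_m,E_m,N^U_{n,m},N^V_{n,m})$ gives the generator $\mathcal G_0$. In $\mathcal G_0$ the switching term $c[f(u,-e,\cdot)-f]$ and the diffusion term $\frac{\sigma^2(u,e)}2(\partial_{z_1z_1}+\partial_{z_2z_2})f$ do not interact. The martingale property passes to subsequential limits, and the triangular well-posedness of this martingale problem yields $W^U,W^V$ independent of each other and of $\mathcal E$.

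A repair closer to your idea is a quenched argument:
\begin{itemize}
\item condition on $\F^E_\infty=\sigma(E_0,B_k:k\geq1)$;
\item observe that $\bar U_k$ is $\F^E_\infty$-measurable;
\item observe that, because the environment is exogenous, $\Delta M_{k+1}$ remains a martingale difference for $\F_k\vee\F^E_\infty$, with the same conditional covariances;
\item prove a conditional CLT with the $\F^E_\infty$-measurable variance profile $\int\sigma^2(\bar{\mathcal U}_n,\mathcal E_n)\,ds$.
\end{itemize}

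\textbf{The rest of your plan is sound.} The truncation at $-S$ with the $e^{-(a-1/2)S}$ tail bound, and the converging-together passage, match the paper's Step 4. Your uniqueness argument is a valid and more direct alternative to the paper's coupling in Step 5: pull a stationary solution back to time $-S$ and let $S\to\infty$. Only tightness of $\mathcal Z(-S)$ is needed there, not $L^2$ bounds.
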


\subsection{Preliminary estimates}
We first establish that the two reinforced processes synchronize regardless of the environmental switching regime.
\begin{proposition}\label{propo_synchronization}
Under condition \eqref{ecua_admissibility}, $V_n\longrightarrow0$ almost surely and in $L^2$. Consequently, $X_n^1-X_n^2\longrightarrow0$ almost surely and in $L^2$.
\end{proposition}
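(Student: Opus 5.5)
The proof rests on the autonomous recursion \eqref{ecua_V-recursion} of Proposition \ref{propo_UV-recursions}, which contains no environmental term. We first obtain an $L^2$ bound and then upgrade it to almost sure convergence using a Robbins--Siegmund argument.

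\emph{$L^2$ convergence.} Square \eqref{ecua_V-recursion} and take conditional expectations. Since $\Eb[\Delta M^V_{n+1}\mid\F_n]=0$ and $|\Delta M^V_{n+1}|\le 1$, this gives
\[
\Eb\left[V_{n+1}^2\mid\F_n\right]\le (1-br_n)^2V_n^2+r_n^2 .
\]
Now $b=a+2\kappa>0$ and $r_n\to0$, so $(1-br_n)^2\le 1-br_n$ for all large $n$. Hence $v_n=\Eb[V_n^2]$ satisfies $v_{n+1}\le(1-br_n)v_n+r_n^2$ eventually. Because $\sum r_n=\infty$ and $r_n^2/r_n\to0$, the standard deterministic lemma (Chung-type) yields $v_n\to0$. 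Explicitly, iterating gives $v_n\le C\prod_{k<n}(1-br_k)+\sum_{k<n}r_k^2\prod_{j=k+1}^{n-1}(1-br_j)$. The product behaves like $n^{-b}$, and the sum is $O(n^{-\min(b,1)}\log n)$, which tends to zero.

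\emph{Almost sure convergence.} The same conditional inequality shows that $V_n^2$ is a nonnegative almost-supermartingale:
\[
\Eb[V_{n+1}^2\mid\F_n]\le V_n^2-br_nV_n^2+r_n^2
\]
for large $n$, where $\sum r_n^2<\infty$. The Robbins--Siegmund theorem then gives two facts: $V_n^2$ converges almost surely to a finite limit, and $\sum_n r_nV_n^2<\infty$ almost surely. Since $\sum r_n=\infty$, the limit must be $0$, because otherwise $V_n^2\ge\epsilon$ eventually and the series would diverge. If we want to avoid Robbins--Siegmund, an alternative is to write $V_n=\pi_nV_0+\sum_k \pi_{n,k}r_k\Delta M^V_{k+1}$. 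The martingale part $\sum_k r_k\Delta M^V_{k+1}$ converges almost surely because it is $L^2$-bounded, and Kronecker's lemma then gives the conclusion.

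Finally, by \eqref{ecua_inverse-UV}, $X_n^1-X_n^2=2V_n$, so both modes of convergence transfer immediately.

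The only mildly delicate point is the case $b$ small, in particular $b<1$, where the $L^2$ rate degrades. Even there, the qualitative convergence needs only $b>0$, which follows from \eqref{ecua_admissibility}. The almost sure argument uses nothing beyond $\sum r_n=\infty$ and $\sum r_n^2<\infty$, so I expect no real obstacle.
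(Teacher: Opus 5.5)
Your proof is correct, and your almost-sure argument is the paper's: Robbins--Siegmund applied to $V_n^2$ gives almost-sure convergence and $\sum_n r_nV_n^2<\infty$, and $\sum_n r_n=\infty$ then forces the limit to be zero. The only difference is the $L^2$ part: the paper gets it directly from the almost-sure convergence and $|V_n|\le 1/2$ by bounded convergence, whereas your Chung-type recursion is extra work that in return gives the explicit rate $\Eb[V_n^2]=O(n^{-\min(b,1)}\log n)$.
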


\begin{proof}
From \eqref{ecua_V-recursion}, $V_{n+1}=(1-br_n)V_n+r_n\Delta M_{n+1}^{V}$. Taking conditional second moments and using the martingale-difference property gives
\begin{align*}
\Eb\left[V_{n+1}^2\mid\F_n\right]&=(1-br_n)^2V_n^2+r_n^2\Eb\left[\left(\Delta M_{n+1}^{V}\right)^2\middle|\F_n\right]\\
&\leq(1+b^2r_n^2)V_n^2-2br_nV_n^2+r_n^2.
\end{align*}
Since
$$\sum_{n=0}^{\infty}r_n=\infty,\qquad \sum_{n=0}^{\infty}r_n^2<\infty,$$
the Robbins--Siegmund theorem \cite{Robbins_71} implies that $V_n^2$ converges almost surely and that
$$\sum_{n=0}^{\infty}r_nV_n^2<\infty \qquad\text{almost surely}.$$
Let $L=\lim_nV_n^2$. If $L>0$, then $V_n^2\geq L/2$ eventually, which, together with $\sum_nr_n=\infty$, would imply
$\sum_nr_nV_n^2=\infty$. Hence, $L=0$ almost surely, and therefore $V_n\longrightarrow0$ almost surely. Since $|V_n|\leq1/2$, bounded convergence also gives $V_n\to0$ in $L^2$. The conclusions for the original coordinates follow from $X_n^1-X_n^2=2V_n$.
\end{proof}

Proposition \ref{propo_synchronization} holds for every $\gamma>0$. Thus, in all three environmental regimes, the two occupation proportions become asymptotically indistinguishable. It remains to determine the behavior of their common collective component.

For integers $0\leq k\leq n$, define
\begin{equation}\label{ecua_product-Pkn}
P_{k,n}=\prod_{j=k}^{n-1}\left(1-\frac{a}{j+\nu+1}\right),\qquad\text{and set}\qquad P_{n,n}=1.
\end{equation}
Iterating \eqref{ecua_U-recursion}, with the products defined in \eqref{ecua_product-Pkn}, gives
\begin{equation}\label{ecua_U-variation-constants}
U_n=P_{0,n}U_0+\delta\sum_{k=0}^{n-1}\frac{P_{k+1,n}}{k+\nu+1}E_k+\sum_{k=0}^{n-1}\frac{P_{k+1,n}}{k+\nu+1}\Delta M_{k+1}^{U}.
\end{equation}

The three terms in \eqref{ecua_U-variation-constants} represent, respectively, the contribution of the initial condition, the accumulated environmental forcing, and the martingale fluctuations.

The product $P_{k,n}$ has the explicit representation
$$P_{k,n}=\frac{\Gamma(n+\nu+1-a)\,\Gamma(k+\nu+1)}{\Gamma(n+\nu+1)\,\Gamma(k+\nu+1-a)}.$$
Consequently, there exists a constant $C>0$, independent of $k$ and $n$, such that
\begin{equation}\label{ecua_Pkn-upper-bound}
P_{k,n}\leq C\left(\frac{k+\nu+1}{n+\nu+1}\right)^a,\qquad 0\leq k\leq n.
\end{equation}
Moreover, for every sequence $k_n$ such that $k_n\to\infty$ and $k_n\leq n$,
$$P_{k_n,n}\sim\left(\frac{k_n+\nu+1}{n+\nu+1}\right)^a.$$
In particular, $P_{0,n}U_0\longrightarrow0$.

Define the environmental and martingale components by
\begin{align}\label{ecua_Rn-definition}
H_n&=\delta\sum_{k=0}^{n-1}\frac{P_{k+1,n}}{k+\nu+1}E_k,\notag\\
R_n&=\sum_{k=0}^{n-1}\frac{P_{k+1,n}}{k+\nu+1}\Delta M_{k+1}^{U}.
\end{align}
The identity $P_{k+1,n}-P_{k,n}=a r_kP_{k+1,n}$ will be used repeatedly. In particular,
$$\sum_{k=j}^{n-1}r_kP_{k+1,n}=\frac{1-P_{j,n}}{a},\qquad|H_n|\leq\frac{\delta}{a}.$$
Then
\begin{equation}\label{ecua_U-HR-decomposition}
U_n=P_{0,n}U_0+H_n+R_n.
\end{equation}

The martingale component is asymptotically negligible at the scale relevant for the first-order phase transition.

\begin{lemma}\label{lema_martingale-negligible}
The sequence $R_n$ satisfies $R_n\longrightarrow0$ almost surely and in $L^2$. In fact,
$$\Eb[R_n^2]=\begin{cases}
O(n^{-2a}), & 0<a<\frac12,\\[1mm]
O(n^{-1}\log n), & a=\frac12,\\[1mm]
O(n^{-1}), & a>\frac12.
\end{cases}$$
\end{lemma}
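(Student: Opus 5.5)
The second-moment bound comes from orthogonality of martingale increments; almost sure convergence then needs a separate argument. Since the increments $\Delta M_{k+1}^U$ are $\F_{k+1}$-measurable, conditionally centered, and bounded by $1$, the cross terms vanish in expectation and
$$
\Eb[R_n^2]=\sum_{k=0}^{n-1}\frac{P_{k+1,n}^2}{(k+\nu+1)^2}\,\Eb\bigl[(\Delta M_{k+1}^U)^2\bigr]\leq \sum_{k=0}^{n-1}\frac{P_{k+1,n}^2}{(k+\nu+1)^2}.
$$
Insert the bound \eqref{ecua_Pkn-upper-bound}, $P_{k+1,n}\leq C\,((k+\nu+2)/(n+\nu+1))^{a}$, to obtain
$$
\Eb[R_n^2]\leq \frac{C}{(n+\nu+1)^{2a}}\sum_{k=0}^{n-1}(k+\nu+1)^{2a-2}.
$$
The sum is bounded when $2a-2<-1$, that is, $a<1/2$. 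It is $O(\log n)$ when $a=1/2$, and $O(n^{2a-1})$ when $a>1/2$. This gives the three rates $O(n^{-2a})$, $O(n^{-1}\log n)$ and $O(n^{-1})$. In every case $\Eb[R_n^2]\to0$, which is the $L^2$ convergence.

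For almost sure convergence, the rates are too slow for a direct Borel--Cantelli argument. I would instead copy the Robbins--Siegmund argument of Proposition \ref{propo_synchronization}. Writing $R_{n+1}=\sum_{k\le n}r_kP_{k+1,n+1}\Delta M_{k+1}^U$ and using $P_{k+1,n+1}=(1-ar_n)P_{k+1,n}$ gives the recursion
$$
R_{n+1}=(1-ar_n)R_n+r_n\Delta M_{n+1}^U,\qquad R_0=0.
$$
This recursion can also be read off by subtracting the deterministic part from \eqref{ecua_U-recursion}. From it,
$$
\Eb[R_{n+1}^2\mid\F_n]\leq(1+a^2r_n^2)R_n^2-2ar_nR_n^2+r_n^2.
$$
Since $\sum r_n^2<\infty$ and $\sum r_n=\infty$, the Robbins--Siegmund theorem \cite{Robbins_71} shows that $R_n^2$ converges almost surely and $\sum r_nR_n^2<\infty$. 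Exactly as in the proof of Proposition \ref{propo_synchronization}, this forces $\lim R_n^2=0$ almost surely.

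There are two mild obstacles. The first is the boundary index in the product bound, namely $P_{k+1,n}$ versus $P_{k,n}$. This is harmless because $(k+\nu+2)/(k+\nu+1)\le 2$. The second is checking the summation constants uniformly in $n$. The case $a=1/2$ needs the harmonic sum estimate $\sum_{k<n}(k+\nu+1)^{-1}\le C\log n$ for $n\ge2$.
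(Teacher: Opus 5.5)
Your proposal is correct and follows essentially the same route as the paper. Both proofs get the $L^2$ rates from orthogonality of the martingale increments and the product bound \eqref{ecua_Pkn-upper-bound}, and both get almost sure convergence from the recursion $R_{n+1}=(1-ar_n)R_n+r_n\Delta M_{n+1}^U$ via Robbins--Siegmund together with $\sum_n r_n=\infty$. Your handling of the index shift from $P_{k,n}$ to $P_{k+1,n}$ fills in a detail the paper leaves implicit.
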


\begin{proof}
By the orthogonality of martingale differences and the uniform bound \eqref{ecua_Pkn-upper-bound}, there exists $C>0$ such that
\begin{align*}
\Eb[R_n^2]&=\sum_{k=0}^{n-1}\frac{P_{k+1,n}^2}{(k+\nu+1)^2}\Eb\left[\left(\Delta M_{k+1}^{U}\right)^2\right]\\
&\leq C(n+\nu+1)^{-2a}\sum_{k=0}^{n-1}(k+\nu+1)^{2a-2}.
\end{align*}
The stated $L^2$ estimates follow by considering separately the cases $a<1/2$, $a=1/2$, and $a>1/2$. In particular, $R_n\longrightarrow0$ in $L^2$.

To prove almost-sure convergence, observe from \eqref{ecua_Rn-definition} that
$$R_{n+1}=\left(1-\frac{a}{n+\nu+1}\right)R_n+\frac{1}{n+\nu+1}\Delta M_{n+1}^{U}.$$ Therefore,
$$\Eb\left[R_{n+1}^2\mid\F_n\right]\leq\left(1+a^2r_n^2\right)R_n^2-2ar_nR_n^2+r_n^2.$$ Another application of the Robbins--Siegmund theorem \cite{Robbins_71} shows that $R_n^2$ converges almost surely and that
$$\sum_{n=0}^{\infty}r_nR_n^2<\infty\qquad\text{almost surely}.$$
Let $L_R=\lim_nR_n^2$. If $L_R>0$, then $R_n^2\geq L_R/2$ eventually, which would contradict $\sum_nr_nR_n^2<\infty$ because $\sum_nr_n=\infty$. Hence, $L_R=0$ almost surely, and therefore $R_n\longrightarrow0$ almost surely.
\end{proof}

Combining \eqref{ecua_U-HR-decomposition} with Lemma \ref{lema_martingale-negligible}, we obtain $U_n-H_n\longrightarrow0$ almost surely and in $L^2$. Thus, the first-order asymptotic behavior of the collective coordinate is determined entirely by the weighted environmental term $H_n$. The following sections analyze this term under the three possible relations between the environmental switching probability $q_n$ and the stochastic-approximation step size $1/n$.
\section{First-order limits under time-scale competition} \label{seccion_first-order-regimes}
\subsection{Averaging and freezing regimes} \label{seccion_noncritical}
We begin with the averaging and freezing regimes of Theorem \ref{teo_main}. Their proofs also clarify why the critical case $\gamma=1$ requires a separate functional analysis.

Suppose first that $0<\gamma<1$. For $0\leq k\leq \ell$, independence of the switch indicators gives
$$\Eb[E_kE_\ell]=\prod_{j=k+1}^{\ell}(1-2q_j).$$ Assumption \ref{sup_switching} implies the uniform
correlation-sum bound
$$\sup_{k\geq0}\frac{1}{(k+1)^\gamma}\sum_{\ell=0}^{\infty}|\Eb[E_kE_\ell]|<\infty.$$
Indeed, $q_j\geq c_0j^{-\gamma}$ for all sufficiently large $j$, and comparison with the integral of $\exp\{-c_1[(k+x)^{1-\gamma}-k^{1-\gamma}]\}$ gives an upper bound of order $(k+1)^\gamma$ in the forward direction. The same argument applies backward from $k$, and the finitely many remaining indices are absorbed into the constant.

Put $w_{k,n}=r_kP_{k+1,n}$. Since these weights are nonnegative, the preceding correlation bound, the inequality $2xy\leq x^2+y^2$, and \eqref{ecua_Pkn-upper-bound} yield
$$\Eb[H_n^2]\leq C\sum_{k=0}^{n-1}w_{k,n}^2(k+1)^\gamma \leq C(n+\nu+1)^{-2a} \sum_{k=0}^{n-1}(k+\nu+1)^{2a+\gamma-2}.$$
Consequently,
$$\Eb[H_n^2]=\begin{cases}
O(n^{-2a}), & 2a<1-\gamma,\\
O(n^{\gamma-1}\log n), & 2a=1-\gamma,\\
O(n^{\gamma-1}), & 2a>1-\gamma,
\end{cases}$$ so $H_n\to0$ in $L^2$.

For almost-sure convergence, let $\eta=\min\{2a,1-\gamma\}>0$ and choose $\zeta>1/\eta$. With $n_j=\lfloor j^\zeta\rfloor$, the preceding bounds and Chebyshev's inequality imply that
$$\sum_j\Pb(|H_{n_j}|>\varepsilon)<\infty$$ for every $\varepsilon>0$. Hence, $H_{n_j}\to0$ almost surely. Moreover,
the recursion $H_{n+1}=(1-ar_n)H_n+\delta r_nE_n$ and the bound $|H_n|\leq\delta/a$ give
$$\max_{n_j\leq m<n_{j+1}}|H_m-H_{n_j}|\leq 2\delta\sum_{k=n_j}^{n_{j+1}-1}r_k\longrightarrow0.$$
Thus, $H_n\to0$ almost surely. Together with \eqref{ecua_U-HR-decomposition}, Lemma \ref{lema_martingale-negligible}, and Proposition \ref{propo_synchronization}, this proves part (i) of Theorem \ref{teo_main}.

Now suppose that $\gamma>1$. Then $\sum_nq_n<\infty$, so the first Borel--Cantelli lemma implies that only finitely many switches occur. Consequently, $E_n=E_\infty\in\{-1,+1\}$ eventually almost surely. If $m$ is after the last switch, the recursion for $H_n$ and the telescoping identity give
$$H_n=P_{m,n}H_m+\delta E_\infty\sum_{k=m}^{n-1}r_kP_{k+1,n}=P_{m,n}H_m+\frac{\delta}{a}E_\infty(1-P_{m,n}).$$
Since $P_{m,n}\to0$,
$$H_n\longrightarrow\frac{\delta}{a}E_\infty\qquad\text{almost surely}.$$
The bound $|H_n|\leq\delta/a$ also gives convergence in $L^2$. The same decomposition and synchronization arguments prove part (iii) of Theorem \ref{teo_main}.

\subsection{Critical switching regime}\label{seccion_critical}
Throughout this subsection, $\gamma=1$, so that
\begin{equation}\label{ecua_critical-switching-assumption}
nq_n\longrightarrow c>0.
\end{equation}
The critical first-order limit is a two-state exponential zig-zag process \cite{Bouguet_Cloez_18}, closely related to the critical coin-turning limit of \cite{Englan_Volk_20}. Its direct derivation in the present coordinates provides the joint stationary representation used in the second-order analysis.

\subsubsection{Logarithmic-time environment}

For fixed $T>0$, define $\widetilde{\mathcal E}_n(t)=E_{\lfloor ne^t\rfloor}$, $-T\leq t\leq T$.

\begin{proposition}\label{propo_log-environment}
For every $T>0$, $\widetilde{\mathcal E}_n\Longrightarrow\widetilde{\mathcal E}$ in $D([-T,T],\{-1,+1\})$, where $\widetilde{\mathcal E}$ is the restriction of a two-sided stationary symmetric telegraph process with switching rate $c$.
\end{proposition}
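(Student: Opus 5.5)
The plan is to prove convergence of finite-dimensional distributions of the switching point process, then a tightness/uniform bound. Because $E$ is a $\pm1$-valued process that changes only at switches, the path $\widetilde{\mathcal E}_n$ on $[-T,T]$ is determined by two pieces of data. The first is the initial sign $E_{\lfloor ne^{-T}\rfloor}$. The second is the point process of rescaled switch times
$$\Pi_n=\sum_{k:\ ne^{-T}<k\le ne^{T}} B_k\,\delta_{\log(k/n)}.$$
I would first show $\Pi_n\Rightarrow\Pi$, where $\Pi$ is a Poisson process of intensity $c\,dt$ on $[-T,T]$. Since the $B_k$ are independent Bernoulli with $q_k\sim c/k$, it suffices to compute Laplace functionals. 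For a continuous $f\ge0$,
$$\Eb\bigl[e^{-\Pi_n(f)}\bigr]=\prod_k\bigl(1-q_k(1-e^{-f(\log(k/n))})\bigr).$$
Taking logarithms, using $\max_k q_k\to0$ on this window, and applying the Riemann-sum relation $\sum_k q_k g(\log(k/n))\to c\int_{-T}^T g(t)\,dt$ gives the Poisson Laplace functional. The Riemann-sum relation holds because $\sum_{ne^{s}<k\le ne^{t}}k^{-1}\to t-s$. This is the classical law of small numbers for independent Bernoulli arrays.

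Next I would handle the initial sign. Let $k_n=\lfloor ne^{-T}\rfloor$. The law of $E_{k_n}$ is determined by
$$\Eb[E_{k_n}]=\Eb[E_0]\prod_{j=1}^{k_n}(1-2q_j).$$
Since $\sum_j q_j=\infty$ when $\gamma=1$, this product tends to $0$, so $E_{k_n}$ is asymptotically a symmetric sign. Moreover, $E_{k_n}$ is measurable with respect to $E_0,B_1,\dots,B_{k_n}$, so it is independent of $\Pi_n$. Hence the pair $(E_{k_n},\Pi_n)$ converges jointly to $(\xi,\Pi)$ with $\xi$ uniform on $\{\pm1\}$ and independent of $\Pi$.

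To conclude, let $\mathcal T_1<\mathcal T_2<\cdots$ be the atoms of $\Pi$ and define
$$\widetilde{\mathcal E}(t)=\xi(-1)^{\Pi([-T,t])}.$$
Switching at Poisson rate $c$ from a symmetric initial sign gives the stationary symmetric telegraph process, because the uniform law on $\{\pm1\}$ is invariant for the symmetric generator. This process is also the restriction of the two-sided stationary telegraph process. It then remains to transfer point-process convergence to $J_1$ convergence of the step paths.

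Transferring point-process convergence to $J_1$ convergence is the main obstacle: the map from the point configuration to the step path is continuous only at simple configurations with no atoms at $\pm T$. Both properties hold almost surely for $\Pi$, but I must also control the prelimit. With probability tending to one there are no double switches close together. Indeed,
$$\Pb\bigl(\exists\,k\in(ne^{-T},ne^T]:\ B_k=B_{k+\ell}=1,\ \ell\le \eta n\bigr)\le C\,\eta\, T$$
uniformly in large $n$, and this bound can be made small. The number of atoms in $[-T,T]$ is tight, since its mean is bounded by $\sum q_k\approx 2cT$. On the event of finitely many well-separated atoms, weak convergence of the ordered atom vector $(\mathcal T^n_1,\dots,\mathcal T^n_N)$, together with $N$ and the initial sign, yields the result. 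This follows from the continuous mapping theorem: a time change matching the jump locations shows that $J_1$ distance is controlled by the maximal atom displacement. Alternatively, one could invoke Skorokhod representation to make the atoms converge almost surely and then build the time changes explicitly.
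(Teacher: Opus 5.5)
Your proof is correct, but it takes a different route from the paper's. The paper works directly in $D([-T,T])$ with the counting process $J_n(t)=\sum_{j=m_n(-T)+1}^{m_n(t)}B_j$. It uses Le Cam's inequality for the Poisson finite-dimensional limits and Aldous's criterion for tightness. After that, $\widetilde{\mathcal E}_n=E_{m_n(-T)}(-1)^{J_n}$ follows from a one-line continuous-mapping step for the parity map. You instead get the Poisson limit in one stroke from the Laplace functionals of the switch-time point process. This yields all finite-dimensional laws, and their independence over disjoint sets, at once, and needs no separate tightness argument on the compact window. The price is that you must transfer from point configurations to $J_1$ step paths by hand, through atom matching and time changes. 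Your treatment of the initial sign, including its independence from the later switches, is the same as the paper's. One simplification: your estimate excluding closely spaced double switches in the prelimit is not needed. Suppose the limit configuration $\mu$ is simple with no atoms at $\pm T$, and $\mu_n\to\mu$ weakly. The integer-valued counts of the whole window, and of small balls around the atoms of $\mu$, eventually coincide with those of $\mu$. So $\mu_n$ eventually has exactly one atom near each atom of $\mu$ and no others. This already gives continuity of the configuration-to-path map at almost every limit configuration, which is all the continuous mapping theorem requires.
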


\begin{proof}
Let
$$m_n(t)=\lfloor ne^t\rfloor,\qquad J_n(t)=\sum_{j=m_n(-T)+1}^{m_n(t)}B_j,$$
and define its deterministic compensator by
$$\Lambda_n(t)=\sum_{j=m_n(-T)+1}^{m_n(t)}q_j,\qquad -T\leq t\leq T.$$
The relation $jq_j\to c$ and a logarithmic Riemann-sum comparison give
$$\sup_{-T\leq t\leq T}\left|\Lambda_n(t)-c(t+T)\right|\longrightarrow0,\qquad \max_{m_n(-T)<j\leq m_n(T)}q_j\longrightarrow0.$$
Le Cam's inequality \cite{Le_Cam_60} gives convergence of the counts on disjoint intervals to independent Poisson variables with the corresponding means.

For $t\in[-T,T]$, let $\mathcal F^E_{n,t}= \sigma(E_{m_n(-T)},B_j:m_n(-T)<j\le m_n(t))$. If $\tau_n\le T$ is an $\mathcal F^E_n$-stopping time and $0\le h_n\le h$, then
$$\mathbb P\!\left(J_n(\tau_n+h_n)-J_n(\tau_n)\ge1\right)\le\mathbb E\!\left[
\sum_{j=m_n(\tau_n)+1}^{m_n(\tau_n+h_n)}q_j\right]\le ch+o_n(1),$$
uniformly in $\tau_n$ and $h_n$, since $\sup_{m_n(-T)<j\le m_n(T)}|jq_j-c|\to0$ and the corresponding harmonic sum is bounded by $h+o_n(1)$. Moreover, $\sup_n\mathbb E[J_n(T)]<\infty$. Hence Aldous's criterion \cite{Aldous_1978} yields tightness of $(J_n)$ on $D([-T,T])$. Combined with the convergence of the finite-dimensional distributions, this proves convergence to a rate-$c$ Poisson process.

Moreover,
$$\Eb\left[E_m\right]=\Eb \left[E_0\prod_{j=1}^m(1-2q_j)\right]\longrightarrow0$$
because $\sum_jq_j=\infty$. Hence, $E_{m_n(-T)}$ converges to a symmetric random sign. For every $n$, this sign is independent of the subsequent switches $\{B_j:j>m_n(-T)\}$. Therefore, $\bigl(E_{m_n(-T)},J_n\bigr)\Longrightarrow\bigl(\widetilde{\mathcal E}(-T),J\bigr)$, where $\widetilde{\mathcal E}(-T)$ is a symmetric random sign independent of the rate-$c$ Poisson counting process $J$.

Since $\widetilde{\mathcal E}_n(t)=E_{m_n(-T)}(-1)^{J_n(t)}$, the continuous mapping theorem proves the result. The parity map is almost surely continuous at the limit because the Poisson process has only unit jumps and no fixed-time discontinuities. The symmetric initial law and the time-homogeneous switching rate make the limiting telegraph process stationary.
\end{proof}

Let $\widetilde{\mathcal E}$ be a two-sided stationary telegraph process with rate $c$, set $\mathcal E=\widetilde{\mathcal E}|_{[0,\infty)}$, and define
$$U^{(c)}=\delta\int_{-\infty}^{0}e^{as}\widetilde{\mathcal E}(s)\,ds.$$

\begin{proposition}\label{propo_critical-marginal-limit}
For every $T>0$, $(U_n,\mathcal E_n)\Longrightarrow(U^{(c)},\mathcal E)$ in $\Rb\times D([0,T],\{-1,+1\})$. In particular, $(U_n,E_n)\Longrightarrow \bigl(U^{(c)},\widetilde{\mathcal E}(0)\bigr)$.
\end{proposition}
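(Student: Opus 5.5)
By \eqref{ecua_U-HR-decomposition}, $P_{0,n}U_0\to0$ and, by Lemma \ref{lema_martingale-negligible}, $R_n\to0$ in probability. By Slutsky's lemma it therefore suffices to prove
$$(H_n,\mathcal E_n)\Longrightarrow(U^{(c)},\mathcal E)\quad\text{in }\Rb\times D([0,T],\{-1,+1\}).$$
The plan is to write $H_n$ as a logarithmic-time Riemann sum of the environment and then pass to the limit with Proposition \ref{propo_log-environment}.

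\textbf{Truncation.} Fix $S>0$ and split $H_n$ at $k_n=m_n(-S)=\lfloor ne^{-S}\rfloor$:
$$H_n=P_{k_n,n}H_{k_n}+\delta\sum_{k=k_n}^{n-1}r_kP_{k+1,n}E_k .$$
This follows by iterating the recursion $H_{n+1}=(1-ar_n)H_n+\delta r_nE_n$ from $k_n$ onward. Since $|H_{k_n}|\le\delta/a$ and $P_{k_n,n}\sim e^{-aS}$, the first term is bounded by $C\delta e^{-aS}/a$ uniformly in $n$. On the limiting side,
$$\Bigl|U^{(c)}-\delta\int_{-S}^0e^{as}\widetilde{\mathcal E}(s)\,ds\Bigr|\le \frac{\delta}{a}e^{-aS}.$$
Hence, by the standard approximation argument (Billingsley, Theorem 3.2), it is enough to treat the remaining sum with $S$ fixed and then let $S\to\infty$.

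\textbf{Riemann-sum comparison.} For $k\in[k_n,n)$ write $k=m_n(s)$ with $s\in[-S,0)$. Then
$$r_k\approx\int_{\{s:\,m_n(s)=k\}}ds\qquad\text{and}\qquad P_{k+1,n}=e^{as}+o(1)$$
uniformly, by the asymptotic formula for $P_{k_n,n}$ applied uniformly on $[-S,0]$; the formula holds uniformly because $k\ge ne^{-S}\to\infty$. Summing these approximations gives
$$\delta\sum_{k=k_n}^{n-1}r_kP_{k+1,n}E_k=\delta\int_{-S}^{0}e^{as}\,\widetilde{\mathcal E}_n(s)\,ds+o(1)$$
uniformly in $\omega$, because $|E_k|=1$ and the total weight is bounded. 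Here $\widetilde{\mathcal E}_n$ is the process of Proposition \ref{propo_log-environment}, taken with $T\ge S$.

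\textbf{Continuous mapping.} Consider
$$\Phi:\ D([-S,T],\{-1,+1\})\to\Rb\times D([0,T],\{-1,+1\}),\qquad f\mapsto\Bigl(\delta\int_{-S}^0e^{as}f(s)\,ds,\;f|_{[0,T]}\Bigr).$$
The integral is $J_1$-continuous, since $J_1$ convergence implies convergence at almost every time together with uniform boundedness. The restriction map is continuous at every $f$ that is continuous at $0$ and $T$; this holds almost surely for the telegraph limit, which has no fixed discontinuities. Applying Proposition \ref{propo_log-environment} on $[-S,\max(S,T)]$ and the continuous mapping theorem then gives the truncated joint convergence. The final claim follows by projecting at $t=0$, using $\mathcal E_n(0)=E_n$ and $\mathcal E(0)=\widetilde{\mathcal E}(0)$, the latter a continuity point almost surely.

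The main obstacle is the truncation step. It requires the convergence of the product asymptotics $P_{k,n}\to e^{as}$ to be uniform on the logarithmic window. It also requires the $e^{-aS}$ tail bound to be uniform in $n$, which the deterministic estimate \eqref{ecua_Pkn-upper-bound} together with $|H|\le\delta/a$ provides. No mixing argument is needed, because the entire history before $m_n(-S)$ is damped deterministically.
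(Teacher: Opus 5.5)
Your proposal is correct and follows essentially the same route as the paper: truncate $H_n$ at $\lfloor ne^{-S}\rfloor$ with a deterministic $e^{-aS}$ tail bound, compare the truncated sum uniformly with the logarithmic Riemann integral on $[-S,0]$, apply Proposition \ref{propo_log-environment} and continuous mapping for fixed $S$, and conclude with the converging-together theorem and $U_n-H_n\to0$. One small correction: the tail bound obtained from \eqref{ecua_Pkn-upper-bound} is $C(e^{-aS}+n^{-a})$, which is not uniform in $n$ (it fails when $n<e^{S}$); this does not affect the argument, since Billingsley's Theorem 3.2 only uses the $\limsup_{n}$.
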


\begin{proof}
For $S>0$, let $j_n(S)=\lfloor ne^{-S}\rfloor$ and define
$$H_n^{(S)}=\delta\sum_{k=j_n(S)}^{n-1}\frac{P_{k+1,n}}{k+\nu+1}E_k.$$
The telescoping identity and \eqref{ecua_Pkn-upper-bound} give
\begin{equation}\label{ecua_critical-tail-bound}
|H_n-H_n^{(S)}|\leq C(e^{-aS}+n^{-a}).
\end{equation}
For fixed $S$, the gamma-ratio estimate yields, uniformly for $j_n(S)\leq k<n$, $$\frac{P_{k+1,n}}{k+\nu+1}=n^{-1}(k/n)^{a-1}(1+o(1)).$$ Proposition \ref{propo_log-environment}, applied on $[-S,T]$, and the corresponding logarithmic Riemann-sum approximation imply
$$(H_n^{(S)},\mathcal E_n)\Longrightarrow\left(\delta\int_{-S}^{0}e^{as}\widetilde{\mathcal E}(s)\,ds,\mathcal E\right).$$
Letting $S\to\infty$ removes the truncation by \eqref{ecua_critical-tail-bound}. Finally, $U_n-H_n\longrightarrow0$ in probability by \eqref{ecua_U-HR-decomposition} and Lemma \ref{lema_martingale-negligible}. The converging-together theorem \cite[Theorem 3.2]{Billingsley_1999} then completes the proof.
\end{proof}

\subsubsection{Functional PDMP limit}
Define
$$\mathcal U(t)=e^{-at}U^{(c)}+\delta\int_0^te^{-a(t-s)}\mathcal E(s)\,ds.$$
Equivalently,
\begin{equation}\label{ecua_stationary-filter-representation}
\mathcal U(t)=\delta\int_{-\infty}^te^{-a(t-s)}\widetilde{\mathcal E}(s)\,ds.
\end{equation}

\begin{theorem}\label{teo_critical-functional-convergence}
For every $T>0$,
\begin{equation}\label{ecua_critical-functional-convergence}
(\mathcal U_n,\mathcal E_n)\Longrightarrow(\mathcal U,\mathcal E)
\end{equation}
in $D([0,T],\Rb\times\{-1,+1\})$. The limit is stationary. Its generator acts on the restrictions to $\mathsf S$ of functions $\varphi$ satisfying $\varphi(\cdot,e)\in C_b^1(\Rb)$ for each $e\in\{-1,+1\}$ as
\begin{equation}\label{ecua_PDMP-generator-proof}
\mathcal L\varphi(u,e)=(-au+\delta e)\partial_u\varphi(u,e)+c[\varphi(u,-e)-\varphi(u,e)].
\end{equation}
\end{theorem}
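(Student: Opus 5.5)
The proof will show that, on the logarithmic time scale, the discrete process $U$ is close to a deterministic continuous map of the environment path, apply the functional environment limit of Proposition \ref{propo_log-environment} through the continuous mapping theorem, and then identify the generator.

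\textbf{Step 1: reduction to $H$.} By \eqref{ecua_U-HR-decomposition}, $\mathcal U_n(t)=P_{0,m_n(t)}U_0+H_{m_n(t)}+R_{m_n(t)}$. The initial term is bounded by $C n^{-a}$ uniformly in $t\in[0,T]$.

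For the martingale term, we show $\sup_{t\le T}|R_{m_n(t)}|\to0$ in probability. Write $R_m=P_{n,m}R_n+\widetilde R_{n,m}$ for $n\le m\le ne^T$, where $\widetilde R_{n,m}=\sum_{k=n}^{m-1}r_kP_{k+1,m}\Delta M^U_{k+1}$. The first piece is at most $|R_n|$, which tends to zero by Lemma \ref{lema_martingale-negligible}. For the second piece, note that $P_{k+1,m}=P_{n,m}/P_{n,k+1}$, with $P_{n,m}\le1$ and $1/P_{n,k+1}\le C e^{aT}$ on this window. Hence $|\widetilde R_{n,m}|\le Ce^{aT}\max_{m}|\sum_{k=n}^{m-1}r_kP_{n,k+1}^{-1}\Delta M^U_{k+1}|$. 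Doob's inequality then bounds the second moment of this maximum by $C_T\sum_{k\ge n}r_k^2=O(1/n)$.

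So $\sup_t|\mathcal U_n(t)-H_{m_n(t)}|\to0$ in probability, and by the converging-together theorem it suffices to prove $(H_{m_n(\cdot)},\mathcal E_n)\Rightarrow(\mathcal U,\mathcal E)$.

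\textbf{Step 2: truncation and continuous mapping.} Fix $S>0$ and set
$$H^{(S)}_{n}(t)=\delta\sum_{k=j_n(S)}^{m_n(t)-1}r_kP_{k+1,m_n(t)}E_k.$$
Exactly as in \eqref{ecua_critical-tail-bound}, $\sup_{t\le T}|H_{m_n(t)}-H^{(S)}_n(t)|\le C(e^{-aS}+n^{-a})$, because the telescoping identity controls the omitted weights uniformly in $t$.

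Define the map $\Phi_S:D([-S,T])\to C([0,T])$ by
$$\Phi_S(x)(t)=\delta\int_{-S}^te^{-a(t-s)}x(s)\,ds.$$
This map is continuous for the $J_1$ topology at every bounded path, indeed Lipschitz from $L^1$ to the uniform norm, since $J_1$ convergence implies $L^1$ convergence for uniformly bounded paths. The gamma-ratio asymptotics, applied uniformly over $j_n(S)\le k<m_n(t)$ and $t\le T$, give
$$\sup_{t\le T}\bigl|H_n^{(S)}(t)-\Phi_S(\widetilde{\mathcal E}_n)(t)\bigr|\to0$$
deterministically: replacing the sum by the logarithmic Riemann integral is an $O(1/n)$ error per unit of log-time, since the $E_k$ are bounded. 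Proposition \ref{propo_log-environment} on $[-S,T]$ and the continuous mapping theorem, applied to the pair $(\Phi_S(\widetilde{\mathcal E}_n),\widetilde{\mathcal E}_n|_{[0,T]})$, then give joint convergence to $(\Phi_S(\widetilde{\mathcal E}),\mathcal E)$. Letting $S\to\infty$ and applying Billingsley's Theorem 3.2 a second time yields convergence to $(\mathcal U,\mathcal E)$, with $\mathcal U$ as in \eqref{ecua_stationary-filter-representation}. Since the first coordinate of the limit is continuous, joint convergence in the product Skorokhod space is not an issue.

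I expect the main obstacle to be making these uniform-in-$t$ estimates rigorous, namely the sup-norm control of $R$ and the uniform Riemann-sum comparison. Everything else reduces to results already stated.

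\textbf{Step 3: stationarity and generator.} Stationarity follows from \eqref{ecua_stationary-filter-representation}: the pair $(\mathcal U,\mathcal E)$ is a time-invariant functional of the stationary two-sided telegraph process $\widetilde{\mathcal E}$, so it is stationary under time shifts.

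For the Markov property, between jumps of $\mathcal E$ the path of $\mathcal U$ solves $\dot u=-au+\delta e$. Given the present value $(\mathcal U(t),\mathcal E(t))$, the future depends only on the future switches of $\widetilde{\mathcal E}$, which are independent of the past by the Poisson structure. The state space $\mathsf S$ is invariant, because $|\mathcal U|\le\delta/a$.

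For $\varphi(\cdot,e)\in C_b^1$, we expand $\Eb[\varphi(\mathcal U(h),\mathcal E(h))\mid \mathcal U(0)=u,\mathcal E(0)=e]$. With probability $1-ch+o(h)$ there is no switch, and the chain rule gives the drift term. With probability $ch+o(h)$ there is one switch, and the state moves to $(u+O(h),-e)$. Two or more switches occur with probability $O(h^2)$. Hence
$$h^{-1}\bigl(\Eb\varphi-\varphi\bigr)\to\mathcal L\varphi(u,e)$$
uniformly on $\mathsf S$, which establishes \eqref{ecua_PDMP-generator-proof}.
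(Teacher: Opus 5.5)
Your proof is correct, but it takes a different route from the paper's.

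\textbf{The paper's route.} It never uses the explicit weights $r_kP_{k+1,m}$ on the forward window. It writes $\mathcal U_n(t)=U_n+\int_0^tF(\mathcal U_n(s),\mathcal E_n(s))\,ds+M_n(t)+\varepsilon_n(t)$ with $F(u,e)=-au+\delta e$. The martingale $M_n$ is controlled by Doob's inequality and the discretization error by $C_T/n$. Gronwall's inequality then compares $\mathcal U_n$ with the forward flow $\Phi(U_n,\mathcal E_n)(t)=e^{-at}U_n+\delta\int_0^te^{-a(t-s)}\mathcal E_n(s)\,ds$. The conclusion follows from the joint convergence $(U_n,\mathcal E_n)\Rightarrow(U^{(c)},\mathcal E)$ of Proposition \ref{propo_critical-marginal-limit} and the continuity of $(u,x)\mapsto\Phi(u,x)$. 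Thus the truncation at logarithmic time $-S$ is done only once, at $t=0$, to identify the entrance value, and the result is propagated forward by dynamical stability.

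\textbf{Your route.} You use the closed-form decomposition $U_m=P_{0,m}U_0+H_m+R_m$ and do the truncation and Riemann-sum comparison uniformly over $t\in[0,T]$. The whole path then becomes the continuous functional $\Phi_S$ of the two-sided environment on $[-S,T]$. This is the functional version of the proof of Proposition \ref{propo_critical-marginal-limit}. It identifies the limit directly in the stationary form \eqref{ecua_stationary-filter-representation}, needs only Proposition \ref{propo_log-environment} as input, and recovers the marginal result at $t=0$.

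\textbf{What each buys.} Your cost is the uniform estimate $\sup_{t\le T}|H_n^{(S)}(t)-\Phi_S(\widetilde{\mathcal E}_n)(t)|\le C_S/n$. This does hold, because the gamma-ratio asymptotics have relative error $O(1/k)$ for $k\ge j_n(S)$ and the two boundary cells contribute $O_S(1/n)$. The paper's Gronwall argument is more robust, since it uses only the one-step recursion and a Lipschitz drift. It is also the template reused in Step 3 of the proof of Theorem \ref{teo_hybrid-fluctuation-limit}.

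\textbf{Minor points.}
\begin{itemize}
\item Your Doob bound for $\sup_t|R_{m_n(t)}|$ is valid but unnecessary. Lemma \ref{lema_martingale-negligible} already gives $R_m\to0$ almost surely, hence $\sup_{m\ge n}|R_m|\to0$ almost surely.
\item The restriction map $D([-S,T])\to D([0,T])$ is continuous only at paths continuous at $0$. This holds almost surely for the telegraph limit, which has no fixed-time discontinuities, so it is worth stating.
\item Your short-time expansion in Step 3 is a more explicit version of the paper's appeal to the standard PDMP framework.
\end{itemize}
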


\begin{proof}
Let $F(u,e)=-au+\delta e$ and
$$M_n(t)=\sum_{k=n}^{\lfloor ne^t\rfloor-1}r_k\Delta M_{k+1}^{U}.$$
Since $r_k-\log((k+1)/k)=O(k^{-2})$, the recursion for $U_n$ gives
$$\mathcal U_n(t)=U_n+\int_0^t F(\mathcal U_n(s),\mathcal E_n(s))\,ds+M_n(t)+\varepsilon_n(t),$$
where
$$\sup_{t\leq T}|\varepsilon_n(t)|\leq \frac{C_T}{n}.$$
Doob's inequality gives
$$\Eb\sup_{t\leq T}|M_n(t)|^2\leq4\sum_{k=n}^{\lfloor ne^T\rfloor-1}r_k^2\leq\frac{C_T}{n}.$$

Define
$$\Phi(u,\varepsilon)(t)=e^{-at}u+\delta\int_0^t e^{-a(t-s)}\varepsilon(s)\,ds,$$
Gronwall's inequality therefore shows that
$$\sup_{t\leq T}\left|\mathcal U_n(t)-\Phi(U_n,\mathcal E_n)(t)\right|\longrightarrow0$$
in probability.

Proposition \ref{propo_critical-marginal-limit} yields the required joint convergence of the initial collective coordinate and the future environment. The map $\Phi$ is continuous at telegraph paths: $J_1$ convergence of uniformly bounded paths implies $L^1$ convergence whenever the limit has only finitely many jumps. The continuous mapping theorem proves \eqref{ecua_critical-functional-convergence}.

Representation \eqref{ecua_stationary-filter-representation} gives stationarity, while the deterministic flow between sign changes gives \eqref{ecua_PDMP-generator-proof}, in the standard PDMP framework of \cite{Davis_84}. This also proves \eqref{ecua_main-critical-limit} and \eqref{ecua_main-generator}.
\end{proof}

\begin{corollary} \label{corol_critical-occupation-functional-convergence}
For every $T>0$,
$$(\mathcal X_n^1,\mathcal X_n^2,\mathcal E_n)\Longrightarrow\left(\frac12+\mathcal U,\frac12+\mathcal U,\mathcal E\right)$$
in $D([0,T],\Rb^2\times\{-1,+1\})$.
\end{corollary}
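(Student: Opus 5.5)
The corollary follows from Theorem \ref{teo_critical-functional-convergence} combined with Proposition \ref{propo_synchronization}, via the inverse transformation \eqref{ecua_inverse-UV}. By \eqref{ecua_inverse-UV}, $\mathcal X_n^1(t)=\frac12+\mathcal U_n(t)+\mathcal V_n(t)$ and $\mathcal X_n^2(t)=\frac12+\mathcal U_n(t)-\mathcal V_n(t)$, where $\mathcal V_n(t)=V_{\lfloor ne^t\rfloor}$. Accordingly, I would write the triple as
$$
(\mathcal X_n^1,\mathcal X_n^2,\mathcal E_n)=\Psi(\mathcal U_n,\mathcal E_n)+(\mathcal V_n,-\mathcal V_n,0),\qquad \Psi(u,e)=\Bigl(\tfrac12+u,\tfrac12+u,e\Bigr).
$$

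The map $\Psi$ acts pointwise and is affine in the real coordinates. It is therefore continuous from $D([0,T],\Rb\times\{-1,+1\})$ to $D([0,T],\Rb^2\times\{-1,+1\})$: a common time change for the pair $(u,e)$ serves for all three output coordinates simultaneously. Hence \eqref{ecua_critical-functional-convergence} and the continuous mapping theorem give
$$
\Psi(\mathcal U_n,\mathcal E_n)\Longrightarrow\Bigl(\tfrac12+\mathcal U,\tfrac12+\mathcal U,\mathcal E\Bigr).
$$

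It remains to show that the perturbation $(\mathcal V_n,-\mathcal V_n,0)$ is negligible in the uniform norm, which dominates the Skorokhod metric. For every $T$,
$$
\sup_{t\le T}|\mathcal V_n(t)|\le\sup_{m\ge n}|V_m|.
$$
This tends to $0$ almost surely, because $V_m\to0$ almost surely by Proposition \ref{propo_synchronization}, and hence also in probability. The converging-together theorem \cite[Theorem 3.2]{Billingsley_1999} then yields the claim. Adding a uniformly small perturbation to a Skorokhod-convergent sequence is legitimate because $d_{J_1}(x+y,x)\le\|y\|_\infty$, with the perturbation vanishing in the discrete coordinate.

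I expect no real obstacle. The only point requiring care is that joint $J_1$ convergence is needed for the vector of coordinates, not merely marginal convergence of each coordinate. This is why the argument passes through the single continuous map $\Psi$ applied to the jointly convergent pair $(\mathcal U_n,\mathcal E_n)$, rather than treating the coordinates one by one.
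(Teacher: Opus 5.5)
Your proposal is correct and matches the paper's argument: the paper likewise deduces the corollary from Theorem \ref{teo_critical-functional-convergence}, the inverse transformation \eqref{ecua_inverse-UV}, and the almost-sure bound $\sup_{0\le t\le T}|V_{\lfloor ne^t\rfloor}|\to0$ from Proposition \ref{propo_synchronization}. You simply spell out the continuous-mapping and converging-together steps that the paper leaves implicit.
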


\begin{proof}
Proposition \ref{propo_synchronization} implies
$$\sup_{0\leq t\leq T}|V_{\lfloor ne^t\rfloor}|\longrightarrow0\qquad\text{almost surely}.$$
The result follows from \eqref{ecua_inverse-UV} and Theorem \ref{teo_critical-functional-convergence}, and proves
\eqref{ecua_main-critical-occupation-limit}.
\end{proof}

\subsubsection{Invariant law}
The beta law below describes the continuous marginal of the invariant distribution of the two-state exponential zig-zag process arising in the freezing-chain setting \cite{Bouguet_Cloez_18}. Closely related stationary laws for linear relaxation driven by dichotomous or telegraph noise appear in \cite{Sancho_84,Font_2016,Ratanov_2022}. Its moments enter the second-order covariance formulas. The proof is deferred to Appendix \ref{appendice_invariant-law}.

\begin{proposition}
\label{propo_critical-invariant-distribution}
The PDMP in Theorem \ref{teo_critical-functional-convergence} has a unique invariant distribution. The marginal density of its continuous component is
\begin{equation}\label{ecua_critical-density-derived}
f_c(u)=\frac{a}{\delta}\frac{\Gamma(c/a+1/2)}{\sqrt{\pi}\,\Gamma(c/a)}\left[1-\left(\frac{au}{\delta}\right)^2\right]^{c/a-1},\qquad |u|<\frac{\delta}{a}.
\end{equation}
Equivalently,
\begin{equation}\label{ecua_critical-beta-derived}
\frac12\left(1+\frac{aU^{(c)}}{\delta}\right)\sim\operatorname{Beta}\left(\frac ca,\frac ca\right).
\end{equation}
The joint invariant densities, defined by $\Pb\left(U^{(c)}\in du,\widetilde{\mathcal E}(0)=e\right)=f_e(u)\,du$, $e\in\{-1,+1\}$, are
\begin{equation}\label{ecua_joint-densities}
f_+(u)=\frac12\left(1+\frac{au}{\delta}\right)f_c(u),\qquad f_-(u)=\frac12\left(1-\frac{au}{\delta}\right)f_c(u).
\end{equation}
\end{proposition}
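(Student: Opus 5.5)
We will work directly with the stationary Fokker--Planck (forward Kolmogorov) equations of the PDMP generator \eqref{ecua_PDMP-generator-proof}. Existence of an invariant law is already available: by \eqref{ecua_stationary-filter-representation} and Theorem \ref{teo_critical-functional-convergence}, the law of $(U^{(c)},\widetilde{\mathcal E}(0))$ is invariant. For uniqueness, we use the explicit filter representation. For any initial point $(u,e)\in\mathsf S$, the solution is $\mathcal U(t)=e^{-at}u+\delta\int_0^te^{-a(t-s)}\mathcal E(s)\,ds$. We couple two copies driven by the same telegraph path, started from a common sign after one copy has switched once, which happens after an a.s. finite time. The difference of their continuous components is then deterministic and equal to $e^{-at}(u-u')$. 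Hence any two invariant laws have the same weak limits and coincide. Thus it suffices to identify the law of $(U^{(c)},\widetilde{\mathcal E}(0))$.

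\textbf{Stationary equations.} For the identification, note that $\mathcal U(t)\in(-\delta/a,\delta/a)$ a.s. This holds because the telegraph path is nonconstant on $(-\infty,0]$, so the integral in \eqref{ecua_stationary-filter-representation} is strictly inside the interval. We then look for densities $f_\pm$ on $(-\delta/a,\delta/a)$ satisfying $\int\mathcal L\varphi\,d\mu=0$ for all test functions. Taking $\varphi$ compactly supported in the interior and integrating by parts, this gives
\begin{align*}
-\frac{d}{du}\bigl[(-au+\delta)f_+\bigr]+c(f_--f_+)&=0,\\
-\frac{d}{du}\bigl[(-au-\delta)f_-\bigr]+c(f_+-f_-)&=0.
\end{align*}

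\textbf{Solving the system.} Adding the two equations gives $\frac{d}{du}[(\delta-au)f_+-(au+\delta)f_-]=0$, so the flux is constant. Letting $u\to\pm\delta/a$ shows the constant vanishes, which yields the ratio $f_+/f_-=(\delta+au)/(\delta-au)$. Writing $f_\pm=\tfrac12(1\pm au/\delta)f_c$, the first equation reduces to a first-order linear ODE for $f_c$, namely
$$
\frac{d}{du}\Bigl[\frac{(\delta-au)(\delta+au)}{2\delta}f_c\Bigr]=-c\,\frac{au}{\delta}f_c.
$$
Setting $g=(\delta^2-a^2u^2)f_c$ gives $g'/g=-2cu/(\delta^2-a^2u^2)$. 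Hence $g\propto(\delta^2-a^2u^2)^{c/a}$, and so $f_c\propto[1-(au/\delta)^2]^{c/a-1}$. Normalizing via the substitution $x=\tfrac12(1+au/\delta)$ produces the $\operatorname{Beta}(c/a,c/a)$ constant. The duplication formula $B(c/a,c/a)=\sqrt\pi\,\Gamma(c/a)4^{1-c/a}/\Gamma(c/a+1/2)$ then gives \eqref{ecua_critical-density-derived}, and \eqref{ecua_critical-beta-derived} follows.

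\textbf{Main obstacle.} The hard part is justifying that the invariant law has densities that are regular enough for the integration by parts, and that the boundary flux vanishes. Note that when $c<a$ the density blows up at the endpoints. To avoid this, we plan a verification argument instead. We take the candidate $\mu$ defined by \eqref{ecua_joint-densities}. For $\varphi(\cdot,e)\in C^1_b$, we check $\int\mathcal L\varphi\,d\mu=0$ directly by integrating by parts against the explicit densities. The boundary terms are $(\mp au+\delta e)f_e(u)\varphi$ at $u=\pm\delta/a$. They vanish because the drift factor cancels the singularity: $(\delta-au)f_+\propto(\delta-au)^{c/a}(\delta+au)^{c/a}\to0$ at $u=\delta/a$, and $(\delta+au)f_+\cdot 0$-type terms are integrable. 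Since $C^1$ functions in $u$ form a core for this PDMP in the sense of Davis \cite{Davis_84}, $\mu$ is invariant. By the uniqueness established above, $\mu$ is the law of $(U^{(c)},\widetilde{\mathcal E}(0))$.
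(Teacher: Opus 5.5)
Your argument is correct. Your uniqueness step is the paper's coupling: independent clocks until the signs agree, then a common clock. One small correction there: after the coupling time $\tau$ the gap contracts like $e^{-a(t-\tau)}$, not $e^{-at}(u-u')$.

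The identification step runs in the opposite direction from the paper's. The paper starts from the invariant law $\pi$ and argues as follows.
\begin{itemize}
\item It reads off the distributional balance equations $D\bigl((\delta-au)\pi_+\bigr)=c(\pi_--\pi_+)$ and its companion.
\item From these it infers local bounded variation, hence absolute continuity of $\pi_\pm$ on the open interval.
\item It kills the constant flux by an integrability argument: $J>0$ forces $f_+\geq J/(\delta-au)$, which is not integrable at $\delta/a$.
\end{itemize}
That last point is exactly the step you only sketched as ``letting $u\to\pm\delta/a$''. The paper's route shows directly, without semigroup theory, that any invariant law without endpoint atoms has the stated densities.

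Your verification route bypasses all a priori regularity and the flux analysis, because the boundary terms are explicit. Indeed $(\delta-au)f_+=(\delta+au)f_-=(\delta^2-a^2u^2)f_c/(2\delta)\propto(\delta^2-a^2u^2)^{c/a}$, which vanishes at both endpoints; integrate by parts on $[-\delta/a+\epsilon,\delta/a-\epsilon]$ and let $\epsilon\to0$.

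The price is twofold. First, uniqueness becomes indispensable for the identification rather than an add-on. Second, you need infinitesimal invariance on $C^1$ test functions to imply invariance. Citing Davis is not the right justification for that implication, since he works with the extended generator. Argue directly instead:
\begin{itemize}
\item The semigroup $(T_t)$ is Feller on the compact set $\mathsf S$.
\item It preserves $C^1$ in $u$, because the flow is affine with path-independent slope: $\partial_uT_t\varphi(u,e)=e^{-at}\Eb_{(u,e)}\bigl[\partial_u\varphi(\mathcal U(t),\mathcal E(t))\bigr]$.
\item Hence $\frac{d}{dt}\int T_t\varphi\,d\mu=\int\mathcal L(T_t\varphi)\,d\mu=0$ for all such $\varphi$, and these are dense in $C(\mathsf S)$.
\end{itemize}

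There are two computational slips to fix. The logarithmic derivative is $g'/g=-2acu/(\delta^2-a^2u^2)$; your exponent $c/a$ needs that factor $a$. The duplication formula is $B(z,z)=\sqrt\pi\,\Gamma(z)\,2^{1-2z}/\Gamma(z+1/2)$; with $4^{1-z}$ you would obtain only half of the normalizing constant in \eqref{ecua_critical-density-derived}.
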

In particular,
\begin{equation}\label{ecua_critical-moments}
\Eb[U^{(c)}]=0,\qquad \operatorname{Var}(U^{(c)})=\frac{\delta^2}{a(a+2c)},\qquad \Eb[U^{(c)}\widetilde{\mathcal E}(0)]=\frac{\delta}{a+2c}.
\end{equation}
The density is U-shaped if $c<a$, uniform if $c=a$, and unimodal if $c>a$. It has no mass at the endpoints.

\begin{corollary}\label{corol_no-probability-convergence}
In the critical regime, $U_n$ does not converge in probability.
\end{corollary}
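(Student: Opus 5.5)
We argue by contradiction and compare the law of $U_n$ with the law of $U_{2n}$, or more generally with $U_{m_n(t)}$ for a fixed $t>0$. Suppose $U_n\to\xi$ in probability for some random variable $\xi$. Then $U_n-U_{\lfloor ne^t\rfloor}\to0$ in probability for every fixed $t>0$, since both indices tend to infinity. Theorem~\ref{teo_critical-functional-convergence} gives $(\mathcal U_n(0),\mathcal U_n(t))\Longrightarrow(\mathcal U(0),\mathcal U(t))$. The evaluation map at the fixed times $0$ and $t$ is almost surely continuous at the limit, because $\mathcal U$ has continuous paths by \eqref{ecua_stationary-filter-representation}. By the continuous mapping theorem, $\mathcal U_n(0)-\mathcal U_n(t)\Longrightarrow\mathcal U(0)-\mathcal U(t)$. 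Since the left side tends to $0$ in probability, we obtain $\mathcal U(t)=\mathcal U(0)$ almost surely for every $t>0$.

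The remaining step is to show that this is impossible, and it amounts to a covariance computation. By stationarity and the moments \eqref{ecua_critical-moments}, it suffices to show that
$$\Eb[(\mathcal U(t)-\mathcal U(0))^2]=2\operatorname{Var}(U^{(c)})-2\operatorname{Cov}(\mathcal U(0),\mathcal U(t))>0.$$
Using \eqref{ecua_stationary-filter-representation} together with the telegraph covariance $\Eb[\widetilde{\mathcal E}(s)\widetilde{\mathcal E}(r)]=e^{-2c|s-r|}$, the covariance $\Gamma(t)=\operatorname{Cov}(\mathcal U(0),\mathcal U(t))$ is an explicit combination of $e^{-at}$ and $e^{-2ct}$ with $\Gamma(0)=\delta^2/(a(a+2c))>0$.

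There is also a shorter route that avoids computing $\Gamma(t)$. Between switches, $\mathcal U$ solves $\dot{\mathcal U}=-a\mathcal U+\delta\mathcal E$, and $|\mathcal U|<\delta/a$ almost surely because the Beta law in Proposition~\ref{propo_critical-invariant-distribution} has no mass at the endpoints. Hence $\dot{\mathcal U}(0)\neq0$ almost surely. Moreover, with positive probability there is no switch on $[0,t]$, and on that event $\mathcal U(t)\neq\mathcal U(0)$. This contradicts $\mathcal U(t)=\mathcal U(0)$ almost surely.

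The main point requiring care is the first step: convergence in probability of $U_n$ must be transferred to the pair $(U_n,U_{\lfloor ne^t\rfloor})$. This is immediate, because $\lfloor ne^t\rfloor\to\infty$ and convergence in probability applies along every subsequence of indices. A simpler alternative uses only marginals. If $U_n\to\xi$ in probability, then $\xi$ has the law of $U^{(c)}$ and the $\mathcal U$-path limit would be constant. Constancy in time is excluded by the positive-probability no-switch event described above, on which the deterministic flow strictly moves $\mathcal U$ toward $\delta\mathcal E/a$.
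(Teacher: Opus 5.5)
Your proof is correct and is essentially the paper's argument: convergence in probability would force $U_{\lfloor ne^{t}\rfloor}-U_n\to0$ in probability, contradicting its weak convergence to $\mathcal U(t)-\mathcal U(0)$, which is nonzero on the no-switch event of probability $e^{-ct}$ because the invariant law puts no mass at $\pm\delta/a$. Your ``shorter route'' is exactly the paper's argument and completes the proof; the covariance route, as written, only records $\Gamma(0)>0$ and would still need $\Gamma(t)<\Gamma(0)$ for $t>0$ to stand alone.
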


\begin{proof}
Fix $t_0>0$. On the event that the limiting telegraph process has no switch on $[0,t_0]$, which has probability $e^{-ct_0}>0$,
$$\mathcal U(t_0)-\mathcal U(0)=(1-e^{-at_0})\left(\frac{\delta}{a}\mathcal E(0)-\mathcal U(0)\right).$$
This difference is not concentrated at zero because the invariant law has no mass at $\pm\delta/a$.

If $U_n$ converged in probability, then $U_{\lfloor ne^{t_0}\rfloor}-U_n\longrightarrow0$ in probability. On the other hand, Theorem \ref{teo_critical-functional-convergence} gives
$$U_{\lfloor ne^{t_0}\rfloor}-U_n\Longrightarrow\mathcal U(t_0)-\mathcal U(0),$$
whose distribution is not concentrated at zero. This contradiction completes the proof.
\end{proof}
\begin{remark}The following representation makes the connection with the coin-turning model explicit. Define the time-changed integrated
environmental process
$$\widehat S(t)=\int_0^t E_{\lfloor u^{1/a}\rfloor}\,du.$$
For every integer $m\geq1$,
$$\frac{\delta}{a}\frac{\widehat S(m^a)}{m^a}=\frac{\delta}{a m^a}\sum_{k=0}^{m-1}\bigl((k+1)^a-k^a\bigr)E_k.$$
Thus, this quantity is a normalized weighted integral of the coin-turning sequence $\{E_k: k\ge 0\}$ after the deterministic time change $u=k^a$.

The gamma-ratio estimate for $P_{k,m}$ and the mean-value theorem yield, for every $T>0$,
$$\sup_{n\leq m\leq\lfloor ne^T\rfloor}\sum_{k=0}^{m-1}\left|\frac{P_{k+1,m}}{k+\nu+1}-\frac{(k+1)^a-k^a}{a m^a}\right|\longrightarrow0.$$
Consequently, \eqref{ecua_U-HR-decomposition} and Lemma \ref{lema_martingale-negligible} imply that
$$\sup_{n\leq m\leq\lfloor ne^T\rfloor}\left|U_m-\frac{\delta}{a}\frac{\widehat S(m^a)}{m^a}\right|\longrightarrow0
\qquad\text{almost surely}.$$

Under the critical assumption $nq_n\to c$, the deterministic time change $u=k^a$ produces an asymptotic switching intensity $c/(au)$. Hence, $\widehat S(t)/t$ is naturally associated with the critical coin-turning scaling regime with effective parameter $c/a$; see
\cite{Englan_Volk_18,Englan_Volk_20}. This also explains the appearance of the ratio $c/a$ in the affine-Beta invariant law of the limiting PDMP.

This connection is used only to place the first-order limit within the coin-turning and freezing-chain literature. The second-order analysis requires the exact discrete environmental centering introduced in Section \ref{seccion_model}.
\end{remark}
\section{Exact random centering and hybrid fluctuation limits} \label{seccion_second-order}

The first-order limit retains the environmental forcing but removes the fresh Bernoulli innovations. We now center the discrete system by its exact environmental response and recover those innovations at the square-root scale. Throughout this section, \eqref{ecua_critical-switching-assumption} holds and $a>1/2$. Since $b=a+2\kappa$, this also implies $b>1/2$.
\subsection{Exact random centering and conditional covariance}
Recall the environmental response $(\bar U_n)$ and the deterministic synchronization response $(\bar V_n)$ from Section \ref{seccion_model}. Their recursions are $\bar U_0=U_0$, $\bar U_{n+1}=\bar U_n+r_n(-a\bar U_n+\delta E_n)$, $\bar V_0=V_0$, and $\bar V_{n+1}=(1-br_n)\bar V_n$. Thus, $\bar U_n=P_{0,n}U_0+H_n$ and $\bar V_n=O(n^{-b})$. Set $Y_n^U=U_n-\bar U_n$ and $Y_n^V=V_n-\bar V_n$. Subtracting these two recursions from \eqref{ecua_U-recursion}--\eqref{ecua_V-recursion} gives
\begin{align}
Y_{n+1}^U&=(1-ar_n)Y_n^U+r_n\Delta M_{n+1}^U,\label{ecua_centered-U-recursion}\\
Y_{n+1}^V&=(1-br_n)Y_n^V+r_n\Delta M_{n+1}^V.\label{ecua_centered-V-recursion}
\end{align}
In particular, $Y_n^U=R_n$ in the notation of \eqref{ecua_Rn-definition}.

For the conditional covariance calculation, write
$$p_n=\frac12+(1-a)U_n+\delta E_n,\qquad d_n=(\rho-\kappa)V_n.$$
The two Bernoulli parameters in
\eqref{ecua_transition-walk1}--\eqref{ecua_transition-walk2} are then
$p_n+d_n$ and $p_n-d_n$, respectively.

\begin{lemma}\label{lema_conditional-covariance}
For every $n\geq0$, $\Eb\left[(\Delta M_{n+1}^U)^2\mid\F_n\right]=\Eb\left[(\Delta M_{n+1}^V)^2\mid\F_n\right]=Q_n$ and $\Eb\left[\Delta M_{n+1}^U\Delta M_{n+1}^V\mid\F_n\right]=\Xi_n$, where
\begin{equation}\label{ecua_Qn-Xin}
Q_n=\frac12\left[p_n(1-p_n)-d_n^2\right],\qquad \Xi_n=\frac12d_n(1-2p_n).
\end{equation}
Consequently, if $U_n\to u$, $V_n\to0$, and $E_n=e$ along a sequence, then $Q_n\longrightarrow\sigma^2(u,e)$ and $\Xi_n\longrightarrow0$, where $p$ and $\sigma^2$ are defined in \eqref{ecua_variance-function}.
\end{lemma}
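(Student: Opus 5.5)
The argument is a direct computation from the conditional law of $(A_{n+1}^1,A_{n+1}^2)$ given $\F_n$. Write $\xi^i=A_{n+1}^i-\Eb[A_{n+1}^i\mid\F_n]$ for $i=1,2$. By \eqref{ecua_transition-walk1}--\eqref{ecua_transition-walk2} and the notation just introduced, the conditional success probabilities are $\pi_1=p_n+d_n$ and $\pi_2=p_n-d_n$. This rests on the identity $\rho(X_n^1-\tfrac12)+\kappa(X_n^2-\tfrac12)=(\rho+\kappa)U_n+(\rho-\kappa)V_n$, which follows from \eqref{ecua_inverse-UV}, together with $\rho+\kappa=1-a$; the symmetric identity holds for the second component. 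Conditionally on $\F_n$, the centered innovations $\xi^1,\xi^2$ are independent with $\Eb[(\xi^i)^2\mid\F_n]=\pi_i(1-\pi_i)$ and $\Eb[\xi^1\xi^2\mid\F_n]=0$. By definition $\Delta M_{n+1}^U=(\xi^1+\xi^2)/2$ and $\Delta M_{n+1}^V=(\xi^1-\xi^2)/2$.

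The second moments then follow by expanding. For both $U$ and $V$, the cross term vanishes, so
$$\Eb[(\Delta M^U_{n+1})^2\mid\F_n]=\Eb[(\Delta M^V_{n+1})^2\mid\F_n]=\tfrac14\bigl[\pi_1(1-\pi_1)+\pi_2(1-\pi_2)\bigr].$$
Substituting $\pi_{1,2}=p_n\pm d_n$, the terms linear in $d_n$ cancel, and what remains is $\tfrac14\bigl[2p_n(1-p_n)-2d_n^2\bigr]=Q_n$. For the mixed moment,
$$\Eb[\Delta M^U_{n+1}\Delta M^V_{n+1}\mid\F_n]=\tfrac14\bigl[\pi_1(1-\pi_1)-\pi_2(1-\pi_2)\bigr].$$
Using $\pi(1-\pi)=\pi-\pi^2$, this difference equals $(\pi_1-\pi_2)(1-\pi_1-\pi_2)=2d_n(1-2p_n)$, so the mixed moment is $\tfrac12 d_n(1-2p_n)=\Xi_n$.

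For the consequence, suppose $U_n\to u$, $V_n\to0$ and $E_n=e$ along a subsequence. Then $p_n\to\tfrac12+(1-a)u+\delta e=p(u,e)$ and $d_n=(\rho-\kappa)V_n\to0$. Since $Q_n$ and $\Xi_n$ are continuous polynomials in $(p_n,d_n)$, it follows that $Q_n\to\tfrac12p(u,e)[1-p(u,e)]=\sigma^2(u,e)$ and $\Xi_n\to0$.

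There is no genuine obstacle here. The only point needing care is the bookkeeping identifying the two Bernoulli parameters as $p_n\pm d_n$, in particular the sign of $d_n$ and the use of $\rho+\kappa=1-a$ in rewriting the collective part.
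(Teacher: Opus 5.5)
Your proposal is correct and follows the paper's own proof: conditionally independent centered innovations, $\Delta M^U_{n+1}$ and $\Delta M^V_{n+1}$ written as their half-sum and half-difference, and $h(x)=x(1-x)$ evaluated at $p_n\pm d_n$, with the limit statement obtained by continuity. Your explicit check that the two Bernoulli parameters are $p_n\pm d_n$, using $\rho+\kappa=1-a$ and \eqref{ecua_inverse-UV}, fills in a step the paper leaves implicit in its notation.
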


\begin{proof}
Conditionally on $\F_n$, set $\eta_{n+1}^i=A_{n+1}^i-\Eb[A_{n+1}^i\mid\F_n]$. Then
$$\Delta M_{n+1}^U=\frac{\eta_{n+1}^1+\eta_{n+1}^2}{2},\qquad \Delta M_{n+1}^V=\frac{\eta_{n+1}^1-\eta_{n+1}^2}{2}.$$
The variables $\eta_{n+1}^1$ and $\eta_{n+1}^2$ are conditionally independent. Applying $h(x)=x(1-x)$ to the parameters $p_n\pm d_n$ gives
$$\frac14\{h(p_n+d_n)+h(p_n-d_n)\}=\frac12\{p_n(1-p_n)-d_n^2\}$$
for both conditional variances, and
$$\frac14\{h(p_n+d_n)-h(p_n-d_n)\}=\frac12d_n(1-2p_n)$$
for the conditional covariance. The final assertions follow from $d_n\to0$ and continuity.
\end{proof}

\subsection{Hybrid functional central limit theorem}
For $t\geq0$, let $m_n(t)=\lfloor ne^t\rfloor$ and define $\bar{\mathcal U}_n(t)=\bar U_{m_n(t)}$, $\mathcal Z_n^U(t)=\sqrt{m_n(t)}\,Y_{m_n(t)}^U$, and $\mathcal Z_n^V(t)=\sqrt{m_n(t)}\,Y_{m_n(t)}^V$. Let $(\widetilde{\mathcal E}(t):t\in\Rb)$ be the two-sided stationary telegraph process introduced in Proposition \ref{propo_log-environment}. On an extension of the probability space, let $W^U$ and $W^V$ be independent two-sided standard Brownian motions, independent of $\widetilde{\mathcal E}$. Define
\begin{align}
\widetilde{\mathcal U}(t)&=\delta\int_{-\infty}^te^{-a(t-s)}\widetilde{\mathcal E}(s)\,ds,\notag\\
\widetilde{\mathcal Z}^{U}(t)&=\int_{-\infty}^te^{-(a-1/2)(t-s)}\sigma\bigl(\widetilde{\mathcal U}(s),\widetilde{\mathcal E}(s)\bigr)\,dW^U(s),\label{ecua_stationary-ZU-two-sided}\\
\widetilde{\mathcal Z}^{V}(t)&=\int_{-\infty}^te^{-(b-1/2)(t-s)}\sigma\bigl(\widetilde{\mathcal U}(s),\widetilde{\mathcal E}(s)\bigr)\,dW^V(s).\label{ecua_stationary-ZV-two-sided}
\end{align}
The stochastic integrals are well defined because $a,b>1/2$ and $\sigma$ is bounded. Write $(\mathcal U,\mathcal E,\mathcal Z^U,\mathcal Z^V)$ for the restriction of this stationary process to $[0,\infty)$.

For later reference, the generator $\mathcal G$ of the stationary limiting process acts on functions $f$ such that
$f(\cdot,e,\cdot,\cdot)\in C_c^2(\Rb^3)$ for each $e\in\{-1,+1\}$ as follows:
\begin{align*}
\mathcal Gf(u,e,z_1,z_2)&=(-au+\delta e)\partial_u f-\left(a-\frac12\right)z_1\partial_{z_1}f-\left(b-\frac12\right)z_2\partial_{z_2}f+
\frac{\sigma^2(u,e)}{2}
\left(\partial_{z_1z_1}f+\partial_{z_2z_2}f
\right)\\
&+c\,[f(u,-e,z_1,z_2)-f(u,e,z_1,z_2)].
\end{align*}

\begin{proof}[Proof of Theorem \ref{teo_hybrid-fluctuation-limit}]
Write $\theta_U=a$ and $\theta_V=b$, and set $\lambda_\ell=\theta_\ell-1/2>0$ for $\ell\in\{U,V\}$. We proceed in five steps.

\emph{Step 1: Product and moment bounds.} For $\theta>0$, set
$$P_{k,n}^{(\theta)}=\prod_{j=k}^{n-1}\left(1-\frac{\theta}{j+\nu+1}\right).$$
Choose $N_\theta$ sufficiently large that all factors with $j\geq N_\theta$ are positive. The gamma-ratio argument used for \eqref{ecua_Pkn-upper-bound}, with the finitely many preceding factors absorbed into the constant, gives
\begin{equation}\label{ecua_general-product-bound}
\left|P_{k,n}^{(\theta)}\right|\leq C_\theta\left(\frac{k+\nu+1}{n+\nu+1}\right)^\theta,\qquad 0\leq k\leq n.
\end{equation}
If one of the initial factors is zero, the corresponding product vanishes and the same estimate is immediate.

Iteration of the centered recursions and orthogonality of the martingale differences yield, for $\ell\in\{U,V\}$,
\begin{equation}\label{ecua_centered-second-moment-bound}
\Eb[(Y_n^\ell)^2]\leq C(n+\nu+1)^{-2\theta_\ell}\sum_{k=0}^{n-1}(k+\nu+1)^{2\theta_\ell-2}\leq\frac{C}{n}.
\end{equation}
The product estimate \eqref{ecua_general-product-bound} also gives $\bar V_n=O(n^{-b})$ without requiring positivity of the finitely many initial factors.

Put $z_m^\ell=\sqrt m\,Y_m^\ell$ for $m\geq1$. Expanding \eqref{ecua_centered-U-recursion}--\eqref{ecua_centered-V-recursion} gives
\begin{equation}\label{ecua_scaled-one-step}
z_{m+1}^\ell-z_m^\ell=\frac{1/2-\theta_\ell}{m}z_m^\ell+\frac{\Delta M_{m+1}^\ell}{\sqrt m}+\varepsilon_{m+1}^\ell,
\end{equation}
where
$$|\varepsilon_{m+1}^\ell|\leq C\left(m^{-2}|z_m^\ell|+m^{-3/2}\right).$$
By \eqref{ecua_centered-second-moment-bound},
$$\sup_{m\geq1}\Eb[(z_m^\ell)^2]<\infty.$$
Consequently, for each $T>0$,
\begin{equation}\label{ecua_accumulated-remainder}
\Eb\left[\sum_{m=n}^{\lfloor ne^T\rfloor}|\varepsilon_{m+1}^\ell|\right]\leq C_Tn^{-1/2}\longrightarrow0.
\end{equation}

\emph{Step 2: Joint martingale limit on a finite logarithmic window.}
Define
$$N_n^\ell(t)=\sum_{m=n}^{m_n(t)-1}\frac{\Delta M_{m+1}^\ell}{\sqrt m}, \qquad \ell\in\{U,V\}.$$
Their predictable quadratic and cross-variations, with $Q_m$ and $\Xi_m$ as in \eqref{ecua_Qn-Xin}, are
\begin{align*}
\langle N_n^U\rangle(t)=\langle N_n^V\rangle(t)&=\sum_{m=n}^{m_n(t)-1}\frac{Q_m}{m},\\
\langle N_n^U,N_n^V\rangle(t)&=\sum_{m=n}^{m_n(t)-1}\frac{\Xi_m}{m}.
\end{align*}

Since
$$Q_m=\sigma^2(U_m,E_m)-\frac{d_m^2}{2},\qquad d_m=(\rho-\kappa)V_m,$$
the Lipschitz continuity of $\sigma^2(\,\cdot\,,e)$ on the relevant compact set gives
$$\left|Q_m-\sigma^2(\bar U_m,E_m)\right|\leq C\left(|Y_m^U|+V_m^2\right).$$
Moreover,
$$|\Xi_m|=\frac12|d_m(1-2p_m)|\leq C|V_m|.$$

By Lemma \ref{lema_martingale-negligible} and Proposition \ref{propo_synchronization},
$$\sup_{m\geq n}\left(\left|Q_m-\sigma^2(\bar U_m,E_m)\right|+|\Xi_m|\right)\longrightarrow0\qquad\text{almost surely}.$$

Since $Y_m^U=U_m-\bar U_m=R_m$, Lemma \ref{lema_martingale-negligible}
implies that, for every $T>0$,
$$\sup_{0\le t\le T}\left|\mathcal U_n(t)-\bar{\mathcal U}_n(t)\right|\le\sup_{m\ge n}|R_m|\longrightarrow 0\qquad\text{almost surely}.$$
Consequently, Theorem \ref{teo_critical-functional-convergence}
and Slutsky's theorem yield $(\bar{\mathcal U}_n,\mathcal E_n)\Longrightarrow(\mathcal U,\mathcal E)$ on every compact logarithmic time interval.

For $\ell\in\{U,V\}$, it follows that
$$\sup_{0\leq t\leq T}\left|\langle N_n^\ell\rangle(t)-\sum_{m=n}^{m_n(t)-1}\frac{\sigma^2(\bar U_m,E_m)}{m}\right|\leq C_T\sup_{m\geq n}\left|Q_m-\sigma^2(\bar U_m,E_m)\right|\longrightarrow0 \qquad\text{almost surely}.$$
Moreover, since the logarithmic-time interpolations are constant on the intervals
$$\left[\log\left(\frac mn\right),\log\left(\frac{m+1}{n}\right)\right)$$
and $\log((m+1)/m)=m^{-1}+O(m^{-2})$, the boundedness of $\sigma^2$ gives
$$\sup_{0\leq t\leq T}\left|\sum_{m=n}^{m_n(t)-1}\frac{\sigma^2(\bar U_m,E_m)}{m}
-\int_0^t\sigma^2\bigl(\bar{\mathcal U}_n(s),\mathcal E_n(s)\bigr)\,ds\right|=O(n^{-1}).$$
Since $\sigma^2$ is bounded and continuous, the continuous mapping theorem and Slutsky's theorem now yield
\begin{equation}\label{ecua_bracket-limit}
\left(\bar{\mathcal U}_n,\mathcal E_n,\langle N_n^U\rangle,\langle N_n^V\rangle\right)\Longrightarrow\left(\mathcal U,\mathcal E,\mathcal Q,\mathcal Q\right),
\qquad \mathcal Q(t)=\int_0^t\sigma^2(\mathcal U(s),\mathcal E(s))\,ds.
\end{equation}
The cross-variation satisfies
$$\sup_{t\leq T}\left|\langle N_n^U,N_n^V\rangle(t)\right|\leq C_T\sup_{m\geq n}|V_m|\longrightarrow0 \qquad\text{almost surely}.$$
Equivalently, define the matrix-valued predictable quadratic variation by
$$\langle\mathbf N_n\rangle(t)=\begin{pmatrix}\langle N_n^U\rangle(t)&\langle N_n^U,N_n^V\rangle(t)\\
\langle N_n^U,N_n^V\rangle(t)&\langle N_n^V\rangle(t)\end{pmatrix},
\qquad \mathbf N_n=(N_n^U,N_n^V)^\top.$$
Combining \eqref{ecua_bracket-limit} with the preceding estimate gives
$\left(\bar{\mathcal U}_n,\mathcal E_n,\langle\mathbf N_n\rangle\right)
\Longrightarrow \left(\mathcal U,\mathcal E,\mathcal Q I_2\right)$. Moreover, since $\lvert\Delta M_{m+1}^\ell\rvert\leq1$,
$$\sup_{0\leq t\leq T}\left|\Delta\mathbf N_n(t)\right|\leq\frac{\sqrt{2}}{\sqrt n}\longrightarrow0.$$

Hence, the conditional Lindeberg condition holds. Since the limiting predictable quadratic variation is continuous, the martingale tightness criterion \cite[Theorem VI.4.13 and Proposition VI.3.26]{Jacod_Shiryaev_03} gives $C$-tightness of $(N_n^U,N_n^V)$. In particular, every subsequential martingale limit is continuous and has predictable quadratic variation $\mathcal Q I_2$.

To identify the Gaussian noises jointly with the environmental process, we use a joint martingale problem. Let $f(u,e,z_1,z_2)$ be bounded and suppose that $f(\,\cdot\,,e,\,\cdot\,,\,\cdot\,)\in C_b^3(\Rb^3)$ for each $e\in\{-1,+1\}$. For $m\geq n$, define
$$N_{n,m}^\ell=\sum_{k=n}^{m-1}\frac{\Delta M_{k+1}^\ell}{\sqrt k},\qquad\mathbf Y_{m,n}=(\bar U_m,E_m,N_{n,m}^U,N_{n,m}^V).$$
Also set
$$\zeta_{m+1}^\ell=\frac{\Delta M_{m+1}^\ell}{\sqrt m},\qquad \ell\in\{U,V\},$$
and $\Delta u_m=r_m(-a\bar U_m+\delta E_m)$. The conditional moments of the scaled martingale increments are
\begin{align*}
\Eb[\zeta_{m+1}^U\mid\F_m]&=\Eb[\zeta_{m+1}^V\mid\F_m]=0,\qquad
\Eb[(\zeta_{m+1}^U)^2\mid\F_m]=\Eb[(\zeta_{m+1}^V)^2\mid\F_m]=\frac{Q_m}{m},\qquad\text{and}\\
\Eb[\zeta_{m+1}^U\zeta_{m+1}^V\mid\F_m]&=\frac{\Xi_m}{m}.
\end{align*}

Conditionally on $\F_m$, the switch variable $B_{m+1}$ is independent of $(\zeta_{m+1}^U,\zeta_{m+1}^V)$. Conditioning first on $B_{m+1}$, keeping the possible sign change unexpanded, and applying a third-order Taylor expansion in the continuous variables gives
\begin{align*}
\Eb\left[f(\mathbf Y_{m+1,n})-f(\mathbf Y_{m,n})\mid\F_m\right]&=\Delta u_m\,\partial_u f(\mathbf Y_{m,n})+q_{m+1}\left[f(\bar U_m,-E_m,N_{n,m}^U,N_{n,m}^V)-f(\mathbf Y_{m,n})\right]\\
&+\frac{Q_m}{2m}\left(\partial_{z_1z_1}f(\mathbf Y_{m,n})+\partial_{z_2z_2}f(\mathbf Y_{m,n})\right)+\frac{\Xi_m}{m}
\partial_{z_1z_2}f(\mathbf Y_{m,n})+R_{m,n},
\end{align*}
where, uniformly for $n\leq m\leq ne^T$,
$$|R_{m,n}|\leq C_f\left(m^{-3/2}+m^{-2}\right).$$
The two terms account, respectively, for the third-order martingale remainder and the remaining continuous-variable increments, including those multiplied by $q_{m+1}=O(m^{-1})$.

Since
$$r_m=\frac1m+O(m^{-2})$$
and
$$q_{m+1}=\frac{c}{m}+\frac{mq_{m+1}-c}{m},$$
the preceding expansion can be written as
$$\Eb\left[f(\mathbf Y_{m+1,n})-f(\mathbf Y_{m,n})\mid\F_m\right]=\frac1m\mathcal G_0f(\mathbf Y_{m,n})+\eta_{m,n},$$
where
\begin{align*}
\mathcal G_0f(u,e,z_1,z_2)=(-au+\delta e)\partial_u f+c\left[
f(u,-e,z_1,z_2)-f(u,e,z_1,z_2)\right]+\frac{\sigma^2(u,e)}{2}\left(\partial_{z_1z_1}f+\partial_{z_2z_2}f\right),
\end{align*}
and, uniformly for $n\leq m\leq ne^T$,
\begin{align*}
|\eta_{m,n}|\leq C_f\left(m^{-3/2}+m^{-2}\right) +\frac{C_f}{m}
\left(|mq_{m+1}-c|+\left|Q_m-\sigma^2(\bar U_m,E_m)\right|+|\Xi_m|\right).
\end{align*}

Since $mq_{m+1}\to c$, the preceding estimates, together with
$$\sum_{m=n}^{\lfloor ne^T\rfloor}\left(m^{-3/2}+m^{-2}\right)=O(n^{-1/2}),\qquad \sum_{m=n}^{\lfloor ne^T\rfloor}\frac1m\leq C_T,$$
imply that
$$\sup_{0\leq t\leq T}\left|\sum_{m=n}^{m_n(t)-1}\eta_{m,n}\right|\longrightarrow0\qquad\text{in probability}.$$

Using the same logarithmic Riemann-sum estimate as above, define
$$A_n^f(t)=f\bigl(\bar U_{m_n(t)},E_{m_n(t)},N_n^U(t),N_n^V(t)\bigr)-f(\bar U_n,E_n,0,0)-\int_0^t\mathcal G_0f\bigl(\bar{\mathcal U}_n(s),\mathcal E_n(s),N_n^U(s),N_n^V(s)\bigr)\,ds.$$
The preceding conditional expansion gives $A_n^f=M_n^f+r_n^f$, where $M_n^f$ is a martingale and
$$\sup_{0\leq t\leq T}|r_n^f(t)|\longrightarrow0\qquad\text{in probability}.$$

The convergence of $(\bar{\mathcal U}_n,\mathcal E_n)$ established above and the preceding $C$-tightness result give joint tightness of $(\bar{\mathcal U}_n,\mathcal E_n,N_n^U,N_n^V)$. Since $f$, its derivatives, and the coefficients of $\mathcal G_0$ are bounded, the martingales $M_n^f$ are uniformly integrable on compact time intervals. Thus, the martingale property passes to every subsequential limit, which therefore solves the $\mathcal G_0$-martingale problem with initial law $(U^{(c)},\mathcal E(0),0,0)$.

This martingale problem is well posed because it is triangular. The environmental coordinate is the rate-$c$ telegraph process, and, given its path and the initial state, the first coordinate is its unique linear filter. Given the resulting background $(\mathcal U,\mathcal E)$, the last two coordinates are uniquely represented by stochastic integrals of $\sigma(\mathcal U,\mathcal E)$ against two independent Brownian motions, independent of the telegraph process. Consequently,
\begin{equation}\label{ecua_joint-martingale-environment-limit}
(\bar{\mathcal U}_n,\mathcal E_n,N_n^U,N_n^V)\Longrightarrow\left(\mathcal U,\mathcal E,\int_0^\cdot\sigma(\mathcal U(s),\mathcal E(s))\,dW^U(s),\int_0^\cdot\sigma(\mathcal U(s),\mathcal E(s))\,dW^V(s)\right).
\end{equation}

\emph{Step 3: Tightness and finite-window dynamics.}
Summing \eqref{ecua_scaled-one-step}, using the logarithmic Riemann-sum estimate and applying \eqref{ecua_accumulated-remainder}, we obtain
\begin{equation}\label{ecua_scaled-integral-equation}
\mathcal Z_n^\ell(t)=\mathcal Z_n^\ell(0)-\lambda_\ell\int_0^t\mathcal Z_n^\ell(s)\,ds+N_n^\ell(t)+\widetilde\varepsilon_n^\ell(t),
\end{equation}
where, for every $T>0$,
$$\sup_{0\leq t\leq T}|\widetilde\varepsilon_n^\ell(t)|\longrightarrow0 \qquad\text{in probability}.$$

By \eqref{ecua_centered-second-moment-bound}, the initial vector $\bigl(\mathcal Z_n^U(0),\mathcal Z_n^V(0)\bigr)$ is tight. Step 2 gives joint tightness of $(\bar{\mathcal U}_n,\mathcal E_n,N_n^U,N_n^V)$, while $(N_n^U,N_n^V)$ is $C$-tight. Since the solution map associated with \eqref{ecua_scaled-integral-equation} is continuous at continuous driving paths, the vanishing remainder and the tightness of the initial vector imply joint tightness of $(\bar{\mathcal U}_n,\mathcal E_n,\mathcal Z_n^U,\mathcal Z_n^V)$.

Along any convergent subsequence, passing to the limit in \eqref{ecua_scaled-integral-equation} and using \eqref{ecua_joint-martingale-environment-limit} shows that the limiting coordinates satisfy \eqref{ecua_limit-SDE-ZU}--\eqref{ecua_limit-SDE-ZV}, with an initially unspecified joint entrance law.

\emph{Step 4: Stationary entrance law.}
Fix $S>0$ and let $j_n(S)=\lfloor ne^{-S}\rfloor$. For this step,
extend the logarithmic-time interpolations to $[-S,T]$ using the same
definitions as before. For $\ell\in\{U,V\}$ and $-S\leq t\leq T$,
define the truncated scaled residual
$$\mathcal Z_{n,S}^{\ell}(t)=\sqrt{m_n(t)}\sum_{k=j_n(S)}^{m_n(t)-1}P_{k+1,m_n(t)}^{(\theta_\ell)}r_k\Delta M_{k+1}^{\ell}.$$
For $m\geq j_n(S)$, iteration of the centered recursion from $j_n(S)$ gives
$$Y_m^\ell=P_{j_n(S),m}^{(\theta_\ell)}Y_{j_n(S)}^\ell+\sum_{k=j_n(S)}^{m-1}P_{k+1,m}^{(\theta_\ell)}r_k\Delta M_{k+1}^\ell.$$
Consequently, for $0\leq t\leq T$,
$$\mathcal Z_n^\ell(t)=\sqrt{m_n(t)}P_{j_n(S),m_n(t)}^{(\theta_\ell)}Y_{j_n(S)}^\ell+\mathcal Z_{n,S}^\ell(t).$$

Since $\lambda_\ell>0$, the product and moment bounds give
\begin{align}\label{ecua_stationary-entrance-discrete-tail}
&\Eb\left[\sup_{0\leq t\leq T}m_n(t)\left(P_{j_n(S),m_n(t)}^{(\theta_\ell)}Y_{j_n(S)}^\ell\right)^2\right] \notag\\
&\qquad\leq C\sup_{m\geq n}m\left(\frac{j_n(S)}{m}\right)^{2\theta_\ell}\frac1{j_n(S)}\leq C\left(\frac{j_n(S)}{n}\right)^{2\theta_\ell-1}\leq Ce^{-2\lambda_\ell S}.
\end{align}
Thus, the contribution from times preceding logarithmic time $-S$ becomes negligible uniformly on $[0,T]$ as $n\to\infty$ followed by $S\to\infty$.

On the limiting probability space, define
$$\widetilde{\mathcal Z}_S^\ell(t)=\int_{-S}^{t}e^{-\lambda_\ell(t-s)}\sigma\bigl(\widetilde{\mathcal U}(s),\widetilde{\mathcal E}(s)\bigr)\,dW^\ell(s), \qquad -S\leq t\leq T.$$

For fixed $S>0$, set
$$N_{n,S}^{\ell}(t)=\sum_{k=j_n(S)}^{m_n(t)-1}\frac{\Delta M_{k+1}^{\ell}}{\sqrt{k}},\qquad \ell\in\{U,V\}, \qquad -S\leq t\leq T.$$
Since $j_n(S)\sim ne^{-S}\to\infty$, the arguments of Step 2 apply with $j_n(S)$ as the initial index. In particular, the diagonal predictable quadratic variations converge jointly to
$$\mathcal Q_S(t)=\int_{-S}^{t}\sigma^2\bigl(\widetilde{\mathcal U}(s),\widetilde{\mathcal E}(s)\bigr)\,ds,$$
the cross-variation converges uniformly to zero, and the maximal
jump is bounded by $j_n(S)^{-1/2}$. Therefore,
$\left(\bar{\mathcal U}_n,\mathcal E_n,N_{n,S}^{U},N_{n,S}^{V}\right)\Longrightarrow\left(\widetilde{\mathcal U},\widetilde{\mathcal E}, M_S^{U}, M_S^{V}\right)$ in $D([-S,T],\Rb\times\{-1,+1\}\times\Rb^2)$, where
$$M_S^\ell(t)=\int_{-S}^{t}\sigma\bigl(\widetilde{\mathcal U}(s),\widetilde{\mathcal E}(s)\bigr)\,dW^\ell(s).$$

The scaled one-step expansion used in Step 3, now with zero initial condition at $j_n(S)$, gives
$$\mathcal Z_{n,S}^{\ell}(t)=-\lambda_\ell\int_{-S}^{t}\mathcal Z_{n,S}^{\ell}(s)\,ds+N_{n,S}^{\ell}(t)+\varepsilon_{n,S}^{\ell}(t),$$
where, for fixed $S$ and $T$,
$$\sup_{-S\leq t\leq T}\left|\varepsilon_{n,S}^{\ell}(t)\right|\longrightarrow0\qquad\text{in probability}.$$
The solution map associated with this linear integral equation is continuous at continuous driving paths. Hence, the continuous mapping theorem and the variation-of-constants formula yield $\left(\bar{\mathcal U}_n,\mathcal E_n,\mathcal Z_{n,S}^{U},\mathcal Z_{n,S}^{V}\right)\Longrightarrow\left(\widetilde{\mathcal U},\widetilde{\mathcal E},\widetilde{\mathcal Z}_S^{U},\widetilde{\mathcal Z}_S^{V}\right)$ in $D([-S,T],\Rb\times\{-1,+1\}\times\Rb^2)$.

The difference between the full stationary convolution and its truncation at time $-S$ is
$$\widetilde{\mathcal Z}^\ell(t)-\widetilde{\mathcal Z}_S^\ell(t)=\int_{-\infty}^{-S}e^{-\lambda_\ell(t-s)}\sigma\bigl(\widetilde{\mathcal U}(s),\widetilde{\mathcal E}(s)\bigr)\,dW^\ell(s).$$
Since, for $t\geq0$,
$$\int_{-\infty}^{-S}e^{-\lambda_\ell(t-s)}\sigma\bigl(\widetilde{\mathcal U}(s),\widetilde{\mathcal E}(s)\bigr)\,dW^\ell(s)=e^{-\lambda_\ell t}
\int_{-\infty}^{-S}e^{\lambda_\ell s}\sigma\bigl(\widetilde{\mathcal U}(s),\widetilde{\mathcal E}(s)\bigr)\,dW^\ell(s),$$
It\^o's isometry and the boundedness of $\sigma$ give \begin{equation}\label{ecua_stationary-entrance-limit-tail}
\Eb\left[\sup_{0\leq t\leq T}\left|\int_{-\infty}^{-S}e^{-\lambda_\ell(t-s)}\sigma\bigl(\widetilde{\mathcal U}(s),\widetilde{\mathcal E}(s)
\bigr)\,dW^\ell(s)\right|^2\right]\leq Ce^{-2\lambda_\ell S}.
\end{equation}

By Markov's inequality, \eqref{ecua_stationary-entrance-discrete-tail} implies that, for every $\varepsilon>0$,
$$\lim_{S\to\infty}\limsup_{n\to\infty}\Pb\left(\max_{\ell\in\{U,V\}}\sup_{0\leq t\leq T}\left|\mathcal Z_n^\ell(t)-\mathcal Z_{n,S}^\ell(t)\right|>\varepsilon\right)=0.$$
Similarly, \eqref{ecua_stationary-entrance-limit-tail} gives
$$\lim_{S\to\infty}\Pb\left(\max_{\ell\in\{U,V\}}\sup_{0\leq t\leq T}\left|\widetilde{\mathcal Z}^\ell(t)-\widetilde{\mathcal Z}_S^\ell(t)\right|>\varepsilon\right)=0.$$
Therefore, the converging-together theorem, first as $n\to\infty$ for fixed $S$ and then as $S\to\infty$, gives the asserted joint functional convergence to the restrictions of the two-sided representations \eqref{ecua_stationary-ZU-two-sided} and \eqref{ecua_stationary-ZV-two-sided}. The displayed generator follows from the limiting drift and diffusion characteristics together with the rate-$c$ sign changes.

\emph{Step 5: Uniqueness of the stationary law.}
Existence follows from the two-sided representations \eqref{ecua_stationary-ZU-two-sided}--\eqref{ecua_stationary-ZV-two-sided}. Let $\Pi_1$ and $\Pi_2$ be two stationary laws of the full system. By Proposition \ref{propo_critical-invariant-distribution}, their $(\mathcal U,\mathcal E)$-marginals coincide. Disintegrating over this common marginal, we may couple two corresponding stationary solutions with the same background process and the same Brownian drivers. If
$D^\ell(t)=\mathcal Z^{\ell,1}(t)-\mathcal Z^{\ell,2}(t),\ell\in\{U,V\},$ then the common diffusion term cancels and
$$D^\ell(t)=e^{-\lambda_\ell t}D^\ell(0)\longrightarrow0\qquad\text{almost surely}.$$
Put $\mathbf X^i(t)=(\mathcal U(t),\mathcal E(t),\mathcal Z^{U,i}(t),\mathcal Z^{V,i}(t))$. For every bounded Lipschitz function $g$ on the full state space, let $\operatorname{Lip}(g)$ denote its Lipschitz constant with respect to the product metric. Then stationarity and the coupling give
$$\left|\int g\,d\Pi_1-\int g\,d\Pi_2\right|=\left|\Eb[g(\mathbf X^1(t))-g(\mathbf X^2(t))]\right|\leq\Eb\left[\min\left\{2\|g\|_\infty,\operatorname{Lip}(g)\sum_{\ell\in\{U,V\}}|D^\ell(t)|\right\}\right]\longrightarrow0.$$
Thus $\Pi_1=\Pi_2$, since bounded Lipschitz functions determine probability laws.
\end{proof}

\subsection{Stationary moments and interaction-induced correlation}
Let $\overline\sigma^2=\Eb\left[\sigma^2(\mathcal U(0),\mathcal E(0))\right]$.

\begin{proposition}\label{propo_stationary-fluctuation-moments}
The averaged variance coefficient is
\begin{equation}\label{ecua_average-volatility-explicit}
\overline\sigma^2=\frac18-\frac{\delta^2(1+2ac)}{2a(a+2c)}.
\end{equation}
Moreover,
\begin{align*}
\Eb[(\mathcal Z^U(0))^2]&=\frac{\overline\sigma^2}{2a-1},\qquad
\Eb[(\mathcal Z^V(0))^2]=\frac{\overline\sigma^2}{2b-1},\qquad\text{and}\qquad
\Eb[\mathcal Z^U(0)\mathcal Z^V(0)]=0.\end{align*}
\end{proposition}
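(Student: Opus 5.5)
The proof splits into a first-moment computation of $\sigma^2$ under the invariant law and a second-moment computation for the two stochastic convolutions.

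\emph{Step 1: the averaged volatility.} Expand $\sigma^2(u,e)=\frac12p(1-p)$ with $p=\frac12+x$ and $x=(1-a)u+\delta e$. Then $p(1-p)=\frac14-x^2$, and so
$$\sigma^2(u,e)=\frac18-\frac12\bigl[(1-a)^2u^2+2(1-a)\delta ue+\delta^2\bigr],$$
using $e^2=1$. Take expectations under the stationary law $(\mathcal U(0),\mathcal E(0))\stackrel{d}{=}(U^{(c)},\widetilde{\mathcal E}(0))$ and insert the moments \eqref{ecua_critical-moments}: $\Eb[u^2]=\delta^2/(a(a+2c))$ and $\Eb[ue]=\delta/(a+2c)$. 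This gives
$$\overline\sigma^2=\frac18-\frac{\delta^2}{2}\left[\frac{(1-a)^2}{a(a+2c)}+\frac{2(1-a)}{a+2c}+1\right].$$
Putting everything over $a(a+2c)$, the bracket numerator is $(1-a)^2+2a(1-a)+a^2+2ac=1+2ac$. This yields \eqref{ecua_average-volatility-explicit}. The computation is routine once \eqref{ecua_critical-moments} is accepted, and those moments follow from Proposition \ref{propo_critical-invariant-distribution}.

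\emph{Step 2: second moments of the fluctuation modes.} Use the two-sided representations \eqref{ecua_stationary-ZU-two-sided}--\eqref{ecua_stationary-ZV-two-sided} at $t=0$. Condition on the full path of $\widetilde{\mathcal E}$; this also determines $\widetilde{\mathcal U}$ through \eqref{ecua_stationary-filter-representation}. Since $W^U,W^V$ are independent of $\widetilde{\mathcal E}$, the integrands are deterministic given this $\sigma$-field, and Itô's isometry applies conditionally:
$$\Eb\bigl[(\mathcal Z^U(0))^2\mid\widetilde{\mathcal E}\bigr]=\int_{-\infty}^0e^{2\lambda_U s}\sigma^2\bigl(\widetilde{\mathcal U}(s),\widetilde{\mathcal E}(s)\bigr)\,ds.$$
Take expectations and use Tonelli, which is justified because $\sigma^2$ is bounded and $\lambda_U>0$. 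Stationarity of $(\widetilde{\mathcal U},\widetilde{\mathcal E})$ makes $\Eb[\sigma^2(\widetilde{\mathcal U}(s),\widetilde{\mathcal E}(s))]=\overline\sigma^2$ for every $s$. Hence
$$\Eb[(\mathcal Z^U(0))^2]=\frac{\overline\sigma^2}{2\lambda_U}=\frac{\overline\sigma^2}{2a-1}.$$
The same argument with $\lambda_V=b-1/2$ gives $\overline\sigma^2/(2b-1)$.

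\emph{Step 3: the cross moment.} Given $\widetilde{\mathcal E}$, the two convolutions are Wiener integrals of deterministic integrands against the independent Brownian motions $W^U$ and $W^V$. They are therefore conditionally independent and centered, as stated in Theorem \ref{teo_hybrid-fluctuation-limit}. So
$$\Eb\bigl[\mathcal Z^U(0)\mathcal Z^V(0)\mid\widetilde{\mathcal E}\bigr]=0,$$
and integrating gives zero. Integrability of the product is guaranteed by Cauchy--Schwarz and Step 2.

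\emph{Main obstacle.} The only delicate point is justifying the conditional Itô isometry on the infinite horizon $(-\infty,0]$. I would handle it by working on the product space: take $\widetilde{\mathcal E}$ on one factor and $(W^U,W^V)$ on the other, and treat the integrals as Wiener integrals for each fixed environmental path. Measurability in the path then follows from approximating by truncations $\int_{-S}^0$, which converge in $L^2$ by the bound \eqref{ecua_stationary-entrance-limit-tail}. If one instead works directly with $\mathcal Z^\ell(0)$ as the limit of the truncations $\widetilde{\mathcal Z}_S^\ell(0)$, the identities pass to the limit $S\to\infty$ by the same $L^2$ convergence.
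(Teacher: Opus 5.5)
Your proof is correct. Steps 1 and 3 coincide with the paper's argument. In Step 1 you use the same expansion $\sigma^2=\frac18-\frac12\bigl(p-\frac12\bigr)^2$ together with the invariant moments \eqref{ecua_critical-moments}. In Step 3 you use the same conditional-independence argument given the full environmental path.

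The difference is in Step 2. The paper applies It\^o's formula to $(\mathcal Z^U)^2$ and $(\mathcal Z^V)^2$ in the SDEs \eqref{ecua_limit-SDE-ZU}--\eqref{ecua_limit-SDE-ZV}. It then uses stationarity of the full system to obtain the balance identities $(2a-1)\Eb[(\mathcal Z^U(0))^2]=\overline\sigma^2$ and $(2b-1)\Eb[(\mathcal Z^V(0))^2]=\overline\sigma^2$. You instead compute the second moments directly from the two-sided convolutions. You apply It\^o's isometry conditionally on $\widetilde{\mathcal E}$ and then Tonelli, which needs only stationarity of the background $(\widetilde{\mathcal U},\widetilde{\mathcal E})$.

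Your route is slightly more self-contained. The paper's balance argument tacitly requires $\Eb[(\mathcal Z^\ell(0))^2]<\infty$ and that $\int_0^t\mathcal Z^\ell\sigma\,dW^\ell$ is a true martingale with zero mean. Both facts are supplied precisely by the isometry computation you carry out.

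Conversely, the paper's identity is a generator-type relation. It holds for any stationary solution with finite second moments without invoking the explicit representation, although uniqueness in law from Theorem \ref{teo_hybrid-fluctuation-limit} makes that distinction immaterial here.

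Your approach also produces the conditional variance profile $\int_{-\infty}^0 e^{2\lambda_\ell s}\sigma^2\bigl(\widetilde{\mathcal U}(s),\widetilde{\mathcal E}(s)\bigr)\,ds$. That profile is exactly what underlies the conditionally Gaussian description of the two modes.
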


\begin{proof}
The invariant identities in \eqref{ecua_critical-moments} give
$$\Eb[\mathcal U(0)^2]=\frac{\delta^2}{a(a+2c)}\qquad\text{and}\qquad \Eb[\mathcal U(0)\mathcal E(0)]=\frac{\delta}{a+2c}.$$
Since
$$p(\mathcal U(0),\mathcal E(0))-\frac12=(1-a)\mathcal U(0)+\delta\mathcal E(0),$$
we obtain
\begin{align*}
\Eb\left[\left(p(\mathcal U(0),\mathcal E(0))-\frac12\right)^2\right]
&=(1-a)^2\Eb[\mathcal U(0)^2]+2\delta(1-a)\Eb[\mathcal U(0)\mathcal E(0)]+\delta^2\\
&=\frac{\delta^2(1+2ac)}{a(a+2c)}.
\end{align*}
Combining this identity with
$$\overline\sigma^2=\frac18-\frac12\Eb\left[\left(p(\mathcal U(0),\mathcal E(0))-\frac12\right)^2 \right]$$
proves \eqref{ecua_average-volatility-explicit}.

Applying It\^o's formula to the stationary equations \eqref{ecua_limit-SDE-ZU}--\eqref{ecua_limit-SDE-ZV} and then using stationarity yields
$$(2a-1)\Eb[(\mathcal Z^U(0))^2]=\overline\sigma^2 \qquad\text{and}\qquad (2b-1)\Eb[(\mathcal Z^V(0))^2]=\overline\sigma^2.$$

Conditionally on the complete path of $\widetilde{\mathcal E}$, the process $\widetilde{\mathcal U}$, and hence the integrands in \eqref{ecua_stationary-ZU-two-sided}--\eqref{ecua_stationary-ZV-two-sided}, are fixed. Since $W^U$ and $W^V$ are independent and are also independent of $\widetilde{\mathcal E}$, the two stochastic integrals are conditionally independent and centered. Their conditional product expectation is therefore zero, and the tower property gives $\Eb[\mathcal Z^U(0)\mathcal Z^V(0)]=0$. Unconditional independence does not follow, however, because both conditional variances depend on the same environmental path.
\end{proof}

\begin{corollary}
\label{corol_component-fluctuation-correlation}
For $i=1,2$, define
$$\mathcal R_n^i(t)=\sqrt{m_n(t)}\left(X_{m_n(t)}^i-\frac12-\bar U_{m_n(t)}\right).$$
Then $(\mathcal R_n^1,\mathcal R_n^2)
\Longrightarrow
(\mathcal Z^U+\mathcal Z^V,\mathcal Z^U-\mathcal Z^V)$ in $D([0,T],\Rb^2)$. At stationarity, both limiting components have variance
$$\overline\sigma^2\left(\frac{1}{2a-1}+\frac{1}{2b-1}\right),$$
and their contemporaneous correlation is
\begin{equation}\label{ecua_interaction-correlation}
\operatorname{Corr}\left(\mathcal Z^U(0)+\mathcal Z^V(0),\mathcal Z^U(0)-\mathcal Z^V(0)\right)=\frac{b-a}{a+b-1}=\frac{2\kappa}{2a+2\kappa-1}=\frac{2\kappa}{1-2\rho}.
\end{equation}
\end{corollary}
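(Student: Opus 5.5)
The plan is to write the residuals of the original components in the modal coordinates and then apply the continuous mapping theorem. By \eqref{ecua_inverse-UV},
$$X_m^1-\tfrac12-\bar U_m=Y_m^U+V_m,\qquad X_m^2-\tfrac12-\bar U_m=Y_m^U-V_m,$$
and $V_m=Y_m^V+\bar V_m$. Hence
$$\mathcal R_n^1=\mathcal Z_n^U+\mathcal Z_n^V+\sqrt{m_n(\cdot)}\,\bar V_{m_n(\cdot)},\qquad \mathcal R_n^2=\mathcal Z_n^U-\mathcal Z_n^V-\sqrt{m_n(\cdot)}\,\bar V_{m_n(\cdot)}.$$
The deterministic term is negligible. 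Since $\bar V_m=O(m^{-b})$ and $b>1/2$, we have $\sup_{t\le T}\sqrt{m_n(t)}|\bar V_{m_n(t)}|\le Cn^{1/2-b}\to0$. Theorem \ref{teo_hybrid-fluctuation-limit} gives the joint convergence $(\mathcal Z_n^U,\mathcal Z_n^V)\Rightarrow(\mathcal Z^U,\mathcal Z^V)$ in $D([0,T],\Rb^2)$. The linear map $(z_1,z_2)\mapsto(z_1+z_2,z_1-z_2)$ is continuous on $D([0,T],\Rb^2)$ with the product-space $J_1$ topology, since it acts pointwise on jointly convergent paths. The only point needing care is that the convergence must be joint in $D([0,T],\Rb^2)$, not merely coordinatewise; the theorem provides exactly this. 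Alternatively, the limit is continuous, so $J_1$ convergence is local uniform convergence and addition is continuous there. Slutsky's theorem then removes the vanishing deterministic term.

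For the moments, we use Proposition \ref{propo_stationary-fluctuation-moments}. Write $s_U=\overline\sigma^2/(2a-1)$ and $s_V=\overline\sigma^2/(2b-1)$. Since $\Eb[\mathcal Z^U(0)\mathcal Z^V(0)]=0$, both $\mathcal Z^U(0)\pm\mathcal Z^V(0)$ have variance $s_U+s_V$, and their covariance is $s_U-s_V$. All means are zero because the stochastic convolutions are centered. The correlation is therefore
$$\frac{s_U-s_V}{s_U+s_V}=\frac{(2b-1)-(2a-1)}{(2b-1)+(2a-1)}=\frac{b-a}{a+b-1}.$$
Here we need $\overline\sigma^2>0$, which holds because $\sigma^2$ is strictly positive on the feasible range: $p\in(0,1)$ there.

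The remaining identities come from substituting $b-a=2\kappa$ and $a+b-1=2a+2\kappa-1=1-2\rho$, using $a=1-\rho-\kappa$. The denominator is positive since $a>1/2$ gives $1-2\rho>2\kappa\ge0$. None of these steps is a real obstacle; the substantive input is the joint convergence and the vanishing cross-moment, both already established.
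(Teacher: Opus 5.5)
Your proposal is correct and follows essentially the same route as the paper. Both arguments use the modal rewriting $\mathcal R_n^{1,2}=\mathcal Z_n^U\pm\mathcal Z_n^V\pm\sqrt{m_n}\,\bar V_{m_n}$, remove the deterministic term via $\bar V_m=O(m^{-b})$ with $b>1/2$, and apply the continuous mapping theorem to the joint limit of Theorem~\ref{teo_hybrid-fluctuation-limit}; the correlation then follows from the second moments in Proposition~\ref{propo_stationary-fluctuation-moments} together with $b=a+2\kappa$. Your additional remarks are correct details that the paper leaves implicit: continuity of addition at continuous limits, $\overline\sigma^2>0$, and positivity of $1-2\rho$.
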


\begin{proof}
By \eqref{ecua_inverse-UV}, for every $t\in[0,T]$,
$$\mathcal R_n^1(t)=\mathcal Z_n^U(t)+\mathcal Z_n^V(t)+\sqrt{m_n(t)}\,\bar V_{m_n(t)}
\qquad\text{and}\qquad
\mathcal R_n^2(t)=\mathcal Z_n^U(t)-\mathcal Z_n^V(t)-\sqrt{m_n(t)}\,\bar V_{m_n(t)}.$$ Since $b>1/2$ and $\bar V_m=O(m^{-b})$, the last terms vanish uniformly on compact logarithmic-time intervals. The functional convergence follows from Theorem \ref{teo_hybrid-fluctuation-limit} and the continuous mapping theorem.

Proposition \ref{propo_stationary-fluctuation-moments} gives
\begin{align*}
\operatorname{Var}(\mathcal Z^U(0)\pm\mathcal Z^V(0))&=\overline\sigma^2\left(\frac{1}{2a-1}+\frac{1}{2b-1}\right),\\
\operatorname{Cov}(\mathcal Z^U(0)+\mathcal Z^V(0),\mathcal Z^U(0)-\mathcal Z^V(0))&=\overline\sigma^2\left(\frac{1}{2a-1}-\frac{1}{2b-1}\right).
\end{align*}
Dividing the covariance by the common variance and using $b=a+2\kappa$ gives \eqref{ecua_interaction-correlation}. Since $a>1/2$ and $\kappa\geq0$, it follows that $0\leq\chi<1$. If $a$ is known and $\chi$ denotes the correlation in \eqref{ecua_interaction-correlation}, direct inversion gives
$$\kappa=\frac{\chi(2a-1)}{2(1-\chi)},\qquad\text{and}\qquad \rho= 1-a-\kappa.$$
Thus, at the level of the limiting covariance structure, the pair $(a,\chi)$ determines $(\rho,\kappa)$ within the admissible parameter region.
\end{proof}

\section{Concluding remarks}\label{seccion_conclusion}

We have studied the fluctuation behavior of two interacting reinforced occupation processes when a common Markov environment evolves on the same asymptotic scale as the stochastic-approximation recursion. At the critical switching scale, the environmental response survives in the first-order limit as a nondegenerate telegraph-driven PDMP. Consequently, centering at a deterministic equilibrium does not isolate the intrinsic
square-root fluctuations.

The main contribution is an exact discrete random centering that removes the accumulated environmental response before rescaling. Under $a>1/2$, the centered collective and synchronization coordinates converge
jointly with the first-order background to a stationary hybrid PDMP--diffusion system. Its two fluctuation modes have contraction rates $a-1/2$ and $b-1/2$, respectively, and share an environment-dependent volatility. The two-sided stochastic-convolution representation specifies the stationary entrance law inherited from the complete prelimit history.

The first-order analysis provides the averaging--critical--freezing classification associated with the competition between the stochastic-approximation gain and the environmental switching probability. The synchronization coordinate vanishes in all three regimes, whereas the collective coordinate averages to zero for $0<\gamma<1$, remains random on the logarithmic time scale for $\gamma=1$, and converges to an environment-dependent equilibrium for $\gamma>1$. In the critical regime, the invariant marginal distribution
of the first-order PDMP is an affine transformation of a symmetric Beta distribution.

Conditionally on the complete environmental path, the collective and synchronization fluctuation modes are independent centered Gaussian processes. Their recombination yields the correlation in (47), which is positive precisely when $\kappa>0$. Since the first-order dynamics depend on $(\rho,\kappa)$ only through $a=1-\rho-\kappa$, while the second-order correlation also contains the difference between the two
contraction rates, the pair $(a,\chi)$ provides an asymptotic identifiability relation for the self-interaction and cross-interaction coefficients.

The value $1/2$ is the natural boundary of the square-root Gaussian regime for each contraction mode. Further questions include the fluctuation behavior at this boundary, the corresponding nonstandard scalings in the subcritical contraction regime, and extensions of the random-centering principle to larger interaction networks and finite-state switching environments.

\appendix
\section{Proof of the invariant-law formula}\label{appendice_invariant-law}

\begin{proof}[Proof of Proposition \ref{propo_critical-invariant-distribution}] Existence follows from the two-sided stationary construction. Let $\pi$ denote the resulting invariant probability measure and set $\pi_e(du)=\pi(du,\{e\})$ for $e\in\{-1,+1\}$. The $\mathcal U$-marginal has no atoms at $\pm\delta/a$: attaining an endpoint would require the environmental sign to remain constant throughout the entire past, an event of probability zero.

Writing $D$ for the distributional derivative, stationarity yields
$$D\bigl((\delta-au)\pi_+\bigr)=c(\pi_- - \pi_+)\qquad\text{and}\qquad D\bigl((-\delta-au)\pi_-\bigr)=c(\pi_+ - \pi_-).$$
Consequently, $(\delta-au)\pi_+$ and $(-\delta-au)\pi_-$ have locally bounded-variation densities. Since the corresponding velocities do not
vanish in the interior of $I=(-\delta/a, \delta/a)$, both $\pi_+$ and $\pi_-$ are absolutely continuous on $I$. Denote their densities by $f_+$ and $f_-$. They satisfy
$$-\frac{d}{du}[(-au+\delta)f_+]-cf_++cf_-=0 \qquad\text{and}\qquad -\frac{d}{du}[(-au-\delta)f_-]+cf_+-cf_-=0.$$

Adding these equations shows that the stationary flux $J(u)=(\delta-au)f_+(u)+(-\delta-au)f_-(u)$ is constant on $I$. This constant must be zero. Indeed, if $J>0$, then
$$(\delta-au)f_+(u)=J+(\delta+au)f_-(u)\geq J,$$
so $f_+(u)\geq J/(\delta-au)$, which is not integrable near $\delta/a$. Similarly, if $J<0$, then
$$(\delta+au)f_-(u)=-J+(\delta-au)f_+(u)\geq -J,$$
which would make $f_-$ nonintegrable near $-\delta/a$. Hence, $J=0$.

Writing $f_c=f_++f_-$, the zero-flux identity gives
$$f_+-f_-=\frac{au}{\delta}f_c.$$
Therefore,
$$f_+(u)=\frac12\left(1+\frac{au}{\delta}\right)f_c(u),\qquad f_-(u)=\frac12\left(1-\frac{au}{\delta}\right)f_c(u),$$
which proves \eqref{ecua_joint-densities}. Substitution into either stationary balance equation gives
$$\frac{f_c'(u)}{f_c(u)}=-\frac{2a(c-a)u}{\delta^2-a^2u^2}.$$
Integration and normalization yield \eqref{ecua_critical-density-derived}. The affine change of variables
$$y=\frac12\left(1+\frac{au}{\delta}\right)$$
then gives \eqref{ecua_critical-beta-derived}.

It remains to prove uniqueness. Couple two copies of the environmental process independently until their signs first agree, and use the same Poisson switching clock thereafter. Their coupling time $\tau$ is finite almost surely. Once the environmental signs agree, the corresponding continuous coordinates satisfy
$$|\mathcal U_1(t)-\mathcal U_2(t)|=e^{-a(t-\tau)}|\mathcal U_1(\tau)-\mathcal U_2(\tau)|,\qquad t\geq\tau.$$
Thus, the distance between the coupled processes converges to zero almost surely. Applying this coupling to two invariant initial distributions and using the bounded convergence theorem for bounded Lipschitz functions shows that the two invariant distributions coincide.
\end{proof}

\bibliographystyle{unsrt}  
\bibliography{references}

\end{document}